\def\CC {{\mathbb C}}
\def\L {{\mathbb L }}
\def\H {{\mathcal H}}
\def\V {{\mathcal V}}
\def\A {{\mathbb A}}
\def\R {\mathbb{R}}
\def\D {{\mathfrak D}}
\def\Re {\mathfrak{Re\,}}
\def\Im {\mathfrak{Im\,}}
\def\eps{\varepsilon}
\def\e{{\rm e}}
\def\d{{\rm d}}
\def\ddt{\frac{\d}{\d t}}
\def\i{{\rm i}}
\def \l {\langle}
\def \r {\rangle}
\def \and {{\qquad\text{and}\qquad}}
\newtheorem{proposition}{Proposition}[section]
\newtheorem{theorem}[proposition]{Theorem}
\newtheorem*{thm}{Theorem}
\newtheorem{lemma}[proposition]{Lemma}
\theoremstyle{definition}
\newtheorem*{remark}{Remark}
\numberwithin{equation}{section}
\def \au {\rm}
\def \ti {\it}
\def \jou {\rm}
\def \bk {\it}
\def \no#1#2#3 {{\bf #1} (#3), #2.}
\def \eds#1#2#3 {#1, #2, #3.}
\title[Abstract systems of Timoshenko type]
{Stability analysis of abstract\\ systems of Timoshenko type}
\author[V. Danese, F. Dell'Oro and V. Pata]
{Valeria Danese, Filippo Dell'Oro and Vittorino Pata}
\address{Politecnico di Milano - Dipartimento di Matematica ``F.\ Brioschi''
\newline\indent
Via Bonardi 9, 20133 Milano, Italy}
\email{valeria.danese@polimi.it {\rm (V. Danese)}}
\email{vittorino.pata@polimi.it {\rm (V. Pata)}}
\address{Institute of Mathematics of the Academy of Sciences of the Czech Republic
\newline\indent
\v{Z}itn\'a 25, 115 67 Praha 1, Czech Republic}
\email{delloro@math.cas.cz {\rm (F. Dell'Oro)}}
\subjclass[2000]{35B35, 35P05, 47D05}
\keywords{Timoshenko system, contraction semigroup,
spectral theory, stability, semiuniform stability, exponential stability}
\begin{document}

\begin{abstract}
We consider an abstract system of Timoshenko type
$$
\begin{cases}
\rho_1{{\ddot \varphi}} + a A^{\frac12}(A^{\frac12}\varphi + \psi) =0\\
\rho_2{{\ddot \psi}} + b A \psi + a (A^{\frac12}\varphi + \psi)  -  \delta A^\gamma {\theta} = 0\\
\rho_3{{\dot \theta}} + c A\theta + \delta A^\gamma {{\dot \psi}} =0
\end{cases}
$$
where the operator $A$ is strictly positive selfadjoint.
For any fixed $\gamma\in\R$, the stability properties of the related solution semigroup $S(t)$
are discussed. In particular, a general technique is introduced in order to prove
the lack of exponential decay
of $S(t)$ when the spectrum of the leading operator $A$ is not made by eigenvalues only.
\end{abstract}

\maketitle

\section{Introduction}

\noindent
Let $(H,\l \cdot,\cdot \r, \|\cdot\| ) $ be an infinite-dimensional separable real Hilbert space,
and let
$$A:\D(A)\subset H\to H$$
be a strictly
positive (real) selfadjoint unbounded linear operator with domain $\D(A)$,
i.e.\ $A=A^*>0$,
where the dense embedding $\D(A)\subset H$ need not be compact.
For $t>0$, we consider the abstract evolutionary system
\begin{equation}
\begin{cases}
\label{SYS}
\rho_1{{\ddot \varphi}} + a A^{\frac12}(A^{\frac12}\varphi + \psi) =0,\\
\rho_2{{\ddot \psi}} + b A \psi + a (A^{\frac12}\varphi + \psi)  -  \delta A^\gamma {\theta} = 0, \\
\rho_3{{\dot \theta}} + c A\theta + \delta A^\gamma {{\dot \psi}} =0,
\end{cases}
\end{equation}
in the unknowns
$\varphi=\varphi(t)$, $\psi=\psi(t)$ and $\theta=\theta(t)$,
where the {\it dot} stands for derivative with respect to the time variable $t$.
Here, the coupling exponent $\gamma$ is a real number,
whereas $\rho_1,\rho_2,\rho_3,$ as well as $a,b,c$ and the coupling parameter $\delta$ are
strictly positive fixed constants.

\begin{remark}
For the particular choice $H=L^2(0,\ell)$ and
$$A=-\partial_{xx}\qquad\text{with domain}\qquad
\D(A)=H^2(0,\ell)\cap H^1_0(0,\ell),
$$
system~\eqref{SYS} can be interpreted as a nonlocal version
of a thermoelastic beam model of Timoshenko type \cite{TIM},
subject to the Dirichlet boundary conditions
$$\varphi(0,t)=\varphi(\ell,t)=\psi(0,t)=\psi(\ell,t)=\theta(0,t)=\theta(\ell,t)=0,
$$
where
$\varphi,\psi,\theta: \,(x,t)\in [0,\ell]\times [0,\infty) \mapsto \R$
represent the transverse displacement of a beam with reference configuration $[0,\ell]$,
the rotation angle of a filament and the relative temperature, respectively.
\end{remark}

For every fixed $\gamma\in\R$, system~\eqref{SYS} is shown to generate a
contraction semigroup $S(t)=\e^{t L}$
acting on the natural weak energy space $\H$.
The focus of this work is a detailed analysis of the stability properties of $S(t)$,
which turn out to depend heavily on the particular choice of $\gamma$.
To this end, a crucial object is the so called stability number, firstly introduced in~\cite{MRR,SOU},
$$
\chi = \frac{a}{\rho_1}-\frac{b}{\rho_2},
$$
defined as the difference between the propagation speeds of the first two
hyperbolic equations.

\smallskip
The main result of the paper can be stated as follows:

\begin{thm}
The semigroup $S(t)$ is exponentially stable if and only if
$$\chi=0 \and \gamma = \frac12.$$
\end{thm}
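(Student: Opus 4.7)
The natural framework is the Gearhart--Pr\"uss--Huang characterization: the contraction semigroup $S(t)=\e^{tL}$ is exponentially stable if and only if $i\R \subset \rho(L)$ and
$$
\sup_{\lambda\in\R}\|(i\lambda-L)^{-1}\|_{\mathcal{L}(\H)}<\infty.
$$
The theorem therefore splits into a sufficiency direction (under $\chi=0$ and $\gamma=\frac12$) and a necessity direction (the uniform bound fails whenever $\chi\neq 0$ or $\gamma\neq \frac12$).

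For sufficiency, I would work on the resolvent equation $(i\lambda - L)U=F$ in the energy space $\H$, where $U$ encodes the components $(\varphi,\dot\varphi,\psi,\dot\psi,\theta)$. Taking the real part of $\l (i\lambda-L)U,U\r$ immediately extracts the dissipation carried by the temperature and controls $\|A^{\frac12}\theta\|$ by $\|U\|\|F\|$. The remaining energy pieces, namely the kinetic and potential parts of the two wave-type components, must be recovered through carefully chosen multipliers. The algebraic cancellations needed to close the bound uniformly in $\lambda$ arise exactly when $\chi=0$ (equal propagation speeds suppress a low-frequency obstruction) and $\gamma=\frac12$ (the thermoelastic coupling matches the order of the potential term in the $\psi$-equation). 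A judicious combination of test fields of the form $\psi$, $\dot\psi$, $A^{\frac12}\varphi$ and their fractional variants should then yield $\|U\|\leq C\|F\|$ with $C$ independent of $\lambda\in\R$.

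For necessity, I would analyze separately the two scenarios $\chi\neq 0$ (with $\gamma=\frac12$) and $\gamma\neq \frac12$, producing in each a sequence $F_n\in\H$ and a sequence $\lambda_n\to\infty$ with
$$
\frac{\|(i\lambda_n-L)^{-1}F_n\|}{\|F_n\|}\to\infty.
$$
When $A$ admits a Hilbert basis of eigenvectors $\{e_n\}$ with eigenvalues $\mu_n\to\infty$, one builds an ansatz $U_n$ on $e_n$ and links $\lambda_n$ to $\mu_n$ through the dispersion relation; the blow-up is then a direct algebraic consequence of either $\chi\neq 0$ or $\gamma\neq\frac12$. The novelty claimed in the abstract is to cover the case when such eigenvectors may be absent. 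Here the idea is to replace $e_n$ by Weyl-type sequences for $A$: given $\mu\in\sigma(A)$, pick unit $\xi\in \D(A)$ localized inside the spectral subspace of $A$ associated to a narrow window around $\mu$, so that $A^\alpha \xi$ acts as $\mu^\alpha \xi$ modulo errors as small as desired, for every fractional power $\alpha$ appearing in \eqref{SYS}. Plugging such almost-eigenvectors into the compact-case ansatz reproduces the leading-order identities that force the blow-up.

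The main obstacle is the quantitative control of the residuals generated by this substitution. Each fractional power of $A$ acting on $\xi$ leaves a remainder, and these remainders must remain negligible with respect to the principal terms at the scale $\lambda_n\to\infty$ that realizes the divergence of the resolvent. The remedy is to shrink the spectral window around $\mu_n$ fast enough, as $\mu_n\to\infty$, to dominate the growth rate of $\|U_n\|/\|F_n\|$ dictated by the violation of $\chi=0$ or $\gamma=\frac12$. Once this quantitative matching between the localization scale and the blow-up rate is achieved, both necessity subcases fit into a single abstract scheme, yielding the theorem.
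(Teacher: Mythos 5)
Your overall strategy is sound, and on the necessity side you have correctly identified the paper's key innovation: replacing genuine eigenvectors by unit vectors in the range of a spectral projection $E_{\A}(I_\eps)$ over a shrinking window around $\alpha_n\in\sigma(\A)$, so that all the fractional powers $\A^{q}$ ($q=\gamma,\tfrac12,1$) act as $\alpha_n^{q}$ up to residuals that can be made $o(1)$ at any prescribed rate. That is precisely the content of the paper's Lemma 9.2, and your remark that the window must shrink fast enough to beat the blow-up rate is the right quantitative point. On the sufficiency side you take a genuinely different route: the paper does \emph{not} run the Gearhart--Pr\"uss resolvent estimate but instead builds three time-domain energy functionals (a Mu\~noz Rivera--Racke-type multiplier scheme) and closes a Gronwall inequality; the paper explicitly notes that the frequency-domain option is available, so your choice is legitimate, and it would have the advantage of uniformity of exposition with the necessity part, at the cost of being less portable to nonlinear perturbations.

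That said, there are genuine gaps: in both directions the decisive computations are asserted rather than performed. For sufficiency, the claim that ``the algebraic cancellations needed to close the bound arise exactly when $\chi=0$ and $\gamma=\tfrac12$'' is a restatement of the theorem, not a proof; the actual content is the identification of concrete multipliers (in the paper, $\Lambda_1,\Lambda_2,\Lambda_3$, with $\Lambda_3$ being the one whose derivative produces the potential term $a\|A^{1/2}\varphi+\psi\|^2$ \emph{only because} $\chi=0$) and the verification that the resulting differential inequality closes with constants independent of the data. For necessity, ``a direct algebraic consequence'' hides a nontrivial case analysis that your ``single abstract scheme'' would not deliver as stated: the paper must distinguish $\gamma>\tfrac12$, $\gamma\le\tfrac12$ with $\chi\neq0$, and $\gamma<\tfrac12$ with $\chi=0$, and in the last case it forces the $\psi$-equation rather than the $\varphi$-equation and tunes $\lambda_n$ to the root of a different dispersion relation (so that $\rho_1\lambda_n^2-a\alpha_n\sim a\sqrt{\rho_1\alpha_n/\rho_2}$ rather than vanishing). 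The divergent quantities are then $|\Re B_n|\sim\alpha_n^{2\gamma-1}$, $|\Im B_n|\sim|\chi|\sqrt{\alpha_n}$, and $|\Re C_n|\sim\alpha_n^{1-2\gamma}$ respectively; exhibiting these asymptotics, and checking that the residual terms coming from the approximate-eigenvector substitution are of strictly lower order in each regime, is where the proof actually lives. Until those computations are carried out, the argument is a correct roadmap rather than a proof.
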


Possibly, the most interesting feature of the theorem
is that we are not assuming the compactness of the embedding $\D(A)\subset H$.
This translates into the fact that the spectrum of $A$ can be a complicated object,
not simply made of an increasing sequence $\alpha_n\to\infty$ of eigenvalues.
Indeed, the usual semigroup techniques employed
to prove the lack of exponential decay of linear semigroups rely
in a crucial way on the existence of such a sequence $\alpha_n$. Here, we establish a
general method for the exponential stability analysis when the spectrum of $A$
consists of approximate eigenvalues (which is always the case for selfadjoint operators).
This method allows to revisit stability results
for a number of equations or systems, already known when
the leading operator has compact inverse.
For instance, a complete answer can be given
on the uniform decay of the wave equation coupled with the classical heat equation (see e.g.\ \cite{ABB,LZ})
$$
\begin{cases}
\ddot u+ Au - A^\gamma \theta =0,\\
\dot \theta + A \theta + A^\gamma \dot u =0,
\end{cases}
$$
depending on the parameter $\gamma\in\R$.
In this case, exponential stability occurs if and only if $\gamma\in[\frac12,1]$.

\smallskip
Aside from exponential (or uniform) decay, we are also interested in weaker notions of stability
(see \S3 for the definitions). Indeed, we will prove the following theorem.

\begin{thm}
For a general strictly
positive selfadjoint operator $A$,
\begin{itemize}
\item[{\rm (i)}] $S(t)$ is semiuniformly stable for $\gamma\in[\tfrac12,1]$;
\smallskip
\item[{\rm (ii)}] $S(t)$ is not semiuniformly stable when $\gamma>1$.
\end{itemize}
\end{thm}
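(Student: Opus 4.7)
The overall strategy rests on the Batty theorem: since $S(t)$ is a contraction semigroup on the reflexive space $\H$, semiuniform stability is equivalent to $\sigma(L)\cap i\R=\emptyset$. Both (i) and (ii) therefore reduce to locating the spectrum of $L$ on the imaginary axis.

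\textbf{Part (i).} For $\gamma\in[\tfrac12,1]$, the plan is to show $i\R\subset\rho(L)$ in two steps. First, verify that $0\in\rho(L)$ by solving the stationary system $Lz=f$ componentwise; the positivity of $a,b,c$ together with $A>0$ makes this a routine ellipticity argument producing the a priori bound $\|z\|_\H\leq C\|f\|_\H$. Second, for $\beta\neq 0$, argue by contradiction: the $m$-dissipativity of $L$ forces a sequence $z_n\in\D(L)$ with $\|z_n\|_\H=1$ and $f_n:=(i\beta-L)z_n\to 0$ in $\H$. Pairing with $z_n$ and taking real parts isolates the sole dissipation,
\[
c\,\|A^{1/2}\theta_n\|^2=-\Re\l L z_n,z_n\r_\H=\Re\l f_n,z_n\r_\H\to 0.
\]
The restriction $\gamma\in[\tfrac12,1]$ enters precisely when propagating this decay through the resolvent system: writing $A^\gamma=A^{1/2}\cdot A^{\gamma-1/2}$ with $\gamma-\tfrac12\in[0,\tfrac12]$, the coupling $A^\gamma\tilde\psi_n$ arising from the heat equation can be interpolated and controlled by $\|A^{1/2}\theta_n\|$ and $\|A\theta_n\|$, yielding $\|\tilde\psi_n\|\to 0$. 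Then the second equation of~\eqref{SYS} gives $\|A^{1/2}(A^{1/2}\varphi_n+\psi_n)\|\to 0$, and the first yields $\|\tilde\varphi_n\|\to 0$, contradicting $\|z_n\|_\H=1$.

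\textbf{Part (ii).} For $\gamma>1$, we locate a point of $\sigma(L)$ on $i\R$ via the general technique for non-eigenvalue spectra. Since $A$ is unbounded positive selfadjoint, the spectral theorem yields a divergent sequence $\alpha_n\in\sigma(A)$ and normalized approximate eigenvectors $u_n$ with $\|(A-\alpha_n)u_n\|$ as small as prescribed. Make the rank-one ansatz
\[
\varphi_n=p_n u_n,\quad \tilde\varphi_n=q_n u_n,\quad \psi_n=r_n u_n,\quad \tilde\psi_n=s_n u_n,\quad \theta_n=t_n u_n,
\]
and formally replace $A$ by the scalar $\alpha_n$ in the resolvent equations. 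This reduces the problem to a $5\times 5$ linear algebraic system in $(p_n,q_n,r_n,s_n,t_n)$, and nontrivial solvability at $\lambda=i\beta_n$ yields a characteristic polynomial $P_n(\beta_n)=0$. When $\gamma>1$, the balance between the dissipative term $cA\theta$ and the coupling $\delta A^\gamma\dot\psi$ in the heat equation is reversed at high frequencies, and a suitable root satisfies
\[
\beta_n=\beta_n^{(0)}+O(\alpha_n^{1-\gamma}),
\]
with $\beta_n^{(0)}\in\R$ and $\alpha_n^{1-\gamma}\to 0$. Choosing a real $\tilde\beta_n$ close to this root, and choosing the error $\|(A-\alpha_n)u_n\|$ small enough to absorb the remainders produced by replacing $A$ by $\alpha_n$, yields $(i\tilde\beta_n-L)z_n\to 0$ in $\H$ while $\|z_n\|_\H$ remains bounded below. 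Passing to a subsequence $\tilde\beta_n\to\beta_\infty\in\R$ (or handling $|\tilde\beta_n|\to\infty$ separately) then gives $i\beta_\infty\in\sigma(L)$, so semiuniform stability fails.

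\textbf{Main obstacle.} The delicate step is Part (ii). In the classical setting where $A$ has compact inverse, one feeds a genuine eigenvector of $A$ into the ansatz and the scalar reduction is exact. With only approximate eigenvectors, every occurrence of $Au_n$, $A^{1/2}u_n$, $A^\gamma u_n$ introduces a remainder whose magnitude grows with $\alpha_n$, and these remainders must be balanced against the $\alpha_n^{1-\gamma}$ distance of $\beta_n$ from the real axis while keeping $\|z_n\|_\H$ bounded below. Getting both bounds to close simultaneously is the technical heart of the argument and precisely what the \emph{general technique} announced in the abstract is designed to accomplish.
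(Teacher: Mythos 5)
Your Part (i) is essentially the paper's argument: invoke the Arendt--Batty characterization to reduce semiuniform stability to $\i\R\subset\rho(\L)$, handle $0$ via invertibility of $L$ for $\gamma\le 1$, and rule out nonzero imaginary approximate eigenvalues by first extracting $\theta_n\to0$ in $H^1_\CC$ from the dissipation identity and then propagating the decay through the resolvent system. One imprecision: you propose to control the coupling term by $\|A^{1/2}\theta_n\|$ \emph{and} $\|A\theta_n\|$, but the latter is not available. The correct move (as in the paper) is to apply $\A^{-\frac12}$ to the third resolvent relation, which yields $\A^{\gamma-\frac12}\tilde\psi_n\to0$ using only $\A^{\pm\frac12}\theta_n\to 0$; the hypothesis $\gamma\ge\frac12$ then gives $\tilde\psi_n\to0$ by the Poincar\'e inequality, and $\gamma\le 1$ is what kills the term $\A^{\gamma-\frac12}\theta_n$ in the second equation. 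With that correction, (i) goes through as you outline.

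Part (ii) contains a genuine gap, both logical and strategic. Logically: your construction produces frequencies $\tilde\beta_n$ tied to $\alpha_n\to\infty$ (necessarily $|\tilde\beta_n|\to\infty$ for a hyperbolic--parabolic system of this type), and a sequence with $\|(\i\tilde\beta_n-\L)z_n\|\to0$, $\|z_n\|$ bounded below, with $|\tilde\beta_n|\to\infty$ only shows that the resolvent is \emph{unbounded along} $\i\R$. That contradicts exponential stability but is perfectly compatible with $\i\R\subset\rho(\L)$ --- indeed it is exactly the signature of semiuniform-but-not-exponential decay. To negate semiuniform stability you must exhibit an actual point of $\sigma(\L)$ on the imaginary axis, and the ``pass to a convergent subsequence $\tilde\beta_n\to\beta_\infty$'' branch of your argument never occurs. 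Strategically: you have missed the far simpler mechanism, which is that for $\gamma>1$ the generator $L$ is \emph{not invertible}, so $0\in\sigma(\L)\cap\i\R$ and Theorem~\ref{THM-SUSTABLE} applies immediately. Non-invertibility follows from a regularity mismatch in $\D(L)$: when $\gamma>1$ the two conditions $\delta A^{\gamma-1}\theta-b\psi\in H^2$ and $c\theta+\delta A^{\gamma-1}\tilde\psi\in H^2$ force the hidden regularity $\tilde\psi\in H^{4\gamma-3}$ (resp.\ $H^{2\gamma}$), whereas solving $Lz=f$ requires $\tilde\psi=f_3$ with $f_3$ an arbitrary element of $H^1$; choosing $f_3$ no more regular than $H^1$ shows $L$ is not onto. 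No approximate-eigenvector machinery is needed here; that technique is reserved in the paper for disproving \emph{exponential} stability, where unboundedness of the resolvent along $\i\R$ is precisely the right thing to establish.
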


Conclusion (ii) above follows from the fact that the infinitesimal generator $L$
of $S(t)$ turns out to be invertible
if and only if $\gamma\leq 1$.

In the particular case when $\D(A)\Subset H$ (i.e.\ the embedding $\D(A)\subset H$ is compact),
we are able to complete the picture. Namely, we have

\begin{thm}
If in addition $\D(A)\Subset H$, then
\begin{itemize}
\item[{\rm (i)}] $S(t)$ is semiuniformly stable if and only if $\gamma\leq 1$;
\smallskip
\item[{\rm (ii)}] $S(t)$ is stable for every $\gamma\in\R$.
\end{itemize}
\end{thm}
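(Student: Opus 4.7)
The plan is to exploit the compactness $\D(A)\Subset H$ by decomposing the dynamics into finite-dimensional blocks on the eigenspaces of $A$, and to combine this reduction with a direct computation showing that the point spectrum of $L$ avoids the imaginary axis. Since $A$ is positive and selfadjoint with compact resolvent, there is an orthonormal basis $\{e_n\}\subset H$ of eigenvectors, with eigenvalues $\alpha_n\to\infty$. For each $n$, let $\H_n\subset\H$ be the five-fold product of $\mathrm{span}(e_n)$ endowed with the $\H$-inner product; $\H_n$ is finite-dimensional, invariant under $L$ (which involves only powers of $A$), and $\H$ is the orthogonal direct sum of the $\H_n$. Write $L_n=L|_{\H_n}$ for the induced matrix on each block.

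The first core ingredient is to show $\sigma_p(L)\cap i\R=\emptyset$ for every $\gamma\in\R$. If $LU=i\lambda U$ with $U=(\varphi,\dot\varphi,\psi,\dot\psi,\theta)$, contractivity of $S(t)$ yields $\Re\langle LU,U\rangle_{\H}=0$; the dissipation produced by the third line of~\eqref{SYS} is proportional to $-\|A^{1/2}\theta\|^2$, forcing $\theta=0$. The third equation then reduces to $A^\gamma\dot\psi=0$, and injectivity of $A^\gamma$ gives $\dot\psi=i\lambda\psi=0$. For $\lambda\ne 0$ this implies $\psi=0$; substituting back into the second equation yields $A^{1/2}\varphi=0$, hence $\varphi=0$ and $\dot\varphi=0$. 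The case $\lambda=0$ follows from the same chain of implications applied to the stationary system.

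For (ii), each block $L_n$ is dissipative and admits no eigenvalue on $i\R$ (any eigenvalue of $L_n$ is an eigenvalue of $L$); since $\H_n$ is finite-dimensional, this forces $\Re\sigma(L_n)<0$, whence $\e^{tL_n}x_n\to 0$ for every $x_n\in\H_n$. The linear span of $\bigcup_n\H_n$ is dense in $\H$, and the uniform bound $\|S(t)\|\leq 1$ extends the decay to every $x\in\H$ by a routine $\varepsilon$-approximation. No restriction on $\gamma$ is required.

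For (i), the implication $\gamma>1\Rightarrow S(t)$ is not semiuniformly stable is already Theorem 2(ii), since $L$ fails to be invertible. For the converse I plan to use the Arendt--Batty characterization: a bounded $C_0$-semigroup on a Hilbert space is semiuniformly stable if and only if $i\R\subset\rho(L)$. When $\gamma\leq 1$, $0\in\rho(L)$ by the invertibility statement following Theorem~2, so it remains to check $i\lambda\in\rho(L)$ for $\lambda\in\R\setminus\{0\}$. I would argue by contradiction through a Weyl sequence: if $i\lambda\in\sigma(L)$, choose $U_k\in\D(L)$ with $\|U_k\|=1$ and $(i\lambda-L)U_k\to 0$. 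Then $\|LU_k\|$ is uniformly bounded, and the compact embedding $\D(L)\Subset\H$ (inherited from $\D(A)\Subset H$ in the regime $\gamma\leq 1$) yields a subsequence converging to some $U$ with $\|U\|=1$ and $LU=i\lambda U$, contradicting the absence of imaginary eigenvalues.

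The main obstacle is verifying the compact embedding $\D(L)\Subset\H$ used in the last step. This requires an explicit description of $\D(L)$ and an argument that boundedness in the graph norm transfers to boundedness in appropriate $\D(A^s)$-norms of each component, so that $\D(A)\Subset H$ can be invoked. The coupling term $A^\gamma\theta$ in the second line of~\eqref{SYS} is precisely what restricts the admissible range of $\gamma$: for $\gamma>1$, control of $\theta$ in $\D(A^\gamma)$ from $\|LU\|$ alone is lost, in agreement with the failure of invertibility of $L$ and the conclusion of Theorem~2(ii).
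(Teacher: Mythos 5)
Your argument for part (ii) is correct and takes a genuinely different route from the paper. You block-diagonalize $L$ over the eigenspaces of $A$, kill the purely imaginary point spectrum via the dissipation identity $\Re\l \L z,z\r_{\H_\CC}=-c\|\theta\|_1^2$, and conclude $\Re\sigma(L_n)<0$ on each finite-dimensional block, finishing by density and the contraction bound. The paper instead invokes the Chepyzhov--Pata criterion (Theorem~\ref{THM-STABLE}), restricting $S(t)$ to a more regular invariant space $\V\Subset\H$ to get precompact orbits and then showing that norm-preserving trajectories vanish. Your version is more elementary and self-contained; the paper's version is the one that survives when one later drops the eigenbasis (which is the whole point of the paper), but under the hypothesis $\D(A)\Subset H$ both are legitimate.

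Part (i), however, has a genuine gap at $\gamma=1$. Your plan for the ``if'' direction is: show $\i\R\cap\sigma_{\rm p}(\L)=\emptyset$, take a Weyl sequence $U_k$ for a putative $\i\lambda\in\sigma(\L)$, and use the compact embedding $\D(L)\Subset\H$ to extract a limit eigenvector. But that embedding is compact \emph{if and only if} $\gamma<1$ (Proposition~\ref{PropCompact}): for $\gamma=1$ the paper exhibits the explicit bounded-in-graph-norm sequence $\psi_n=\tfrac{\delta}{b}u_n$, $\tilde\psi_n=-\tfrac{c}{\delta}u_n$, $\theta_n=u_n$ with $u_n$ orthonormal in $H^1$, for which the two ``combined'' domain conditions vanish identically and $\psi_n$ has no convergent subsequence in $H^1$. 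So your compactness argument covers only $\gamma<1$ and leaves the endpoint $\gamma=1$ --- which the theorem asserts is semiuniformly stable --- unproved. The paper closes this case by a direct approximate-eigenvalue computation (Theorem~\ref{SEMSTAB}), valid for all $\gamma\in[\tfrac12,1]$ with no compactness at all: from $\A^{\gamma-\frac12}\tilde\psi_n\to0$ one gets $\tilde\psi_n\to0$ precisely because $\gamma\geq\tfrac12$, and the term $\delta\A^{\gamma-\frac12}\theta_n$ is controlled by $\theta_n\to0$ in $H^1_\CC$ precisely because $\gamma\leq1$; the compactness/Kato route is reserved for $\gamma<\tfrac12$ (Theorem~\ref{SEMSTAB1}), where it is the only available tool and where $\gamma<1$ holds automatically. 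A secondary, fixable omission: when you pick the Weyl sequence you should justify that $\i\lambda\in\sigma(\L)$ is an \emph{approximate} eigenvalue; this follows since $\sigma(\L)$ lies in the closed left half-plane, so spectral points on $\i\R$ are boundary points (the paper cites \cite[Proposition 2.2]{Batty} for the same purpose).
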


It is worth noting that, in this situation,
the domain of $L$ is compactly embedded into the phase space $\H$
if and only if $\gamma<1$. Thus, $\gamma=1$
can be regarded as a sort of critical exponent for the problem.

\smallskip
In order to explain the difficulties encountered in the analysis, we begin to observe that
the sole dissipation present in the system is the ``thermal" one, provided by the third (heat) equation,
while the first two (wave) equations alone are conservative.
Accordingly, the stabilization mechanism is based on the transfer of thermal dissipation into
mechanical dissipation, and this happens through the coupling.
Roughly speaking, the coupling should be sufficiently strong in order for the third equation
to transfer enough dissipation, but not too strong due to the first two equations.
Indeed, wave equations with very strong damping are less likely to stabilize (see e.g.\ \cite{CT1,FAT}).
In addition, for systems of Timoshenko type, the stability number $\chi$ comes into play:
when the waves exhibit different speeds, the dissipation transfer from the variable $\psi$ to $\varphi$
loses effectiveness, whereas $\chi=0$ produces a sort of resonance,
as already observed in~\cite{MRR,SOU}.

\subsection*{Plan of the paper}
In the next \S\ref{Sdue} we introduce the functional setting of the problem.
In \S\ref{Stre} we give a general presentation of the decay properties of bounded linear semigroups.
In~\S\ref{Squattro} we rewrite system~\eqref{SYS} as an ODE
by introducing the linear operator $L$, which is proved
to be the infinitesimal
generator of a contraction semigroup in the subsequent \S\ref{Scinque}.
The remaining \S\ref{Ssei},\ref{Ssette},\ref{Sotto},\ref{Snove}
are devoted to the statements and the proofs of the
decay results. In particular, the exponential stability
and the lack of exponential stability of $S(t)$ are discussed in \S\ref{Sotto} and
\S\ref{Snove}, respectively, whereas \S\ref{Ssei} and \S\ref{Ssette} are concerned with stability
and semiuniform stability.

\section{Functional Setting}
\label{Sdue}

\noindent
We consider the nested family of Hilbert spaces
$$
H^r=\D(A^{\frac{r}2}),\quad r\in\R,
$$
with inner products and norms given by
$$
\l u,v\r_r=\l A^{\frac{r}2}u,A^{\frac{r}2}v\r
\and
\|u\|_r=\|A^{\frac{r}2}u\|.
$$
The index $r$ will be always omitted whenever zero.
For $r>0$, it is understood that $H^{-r}$ denotes the completion of the domain, so that
$H^{-r}$ is the dual space of $H^r$.
The symbol $\l \cdot,\cdot \r$ will also be used to denote the duality pairing between
$H^{-r}$ and $H^{r}$.

\begin{remark}
If $u\in H$ and $r>0$, we can still write $A^r u$ to mean the element of the dual space  $H^{-2r}$ acting as
$$\l A^r u,v\r=\l u,A^r v\r, \quad\forall v\in H^{2r}.$$
\end{remark}

Along the paper, we will also consider the complexification of $H$
(and, more generally, the one of $H^r$). This is
the complex Hilbert space
$$H_\CC=H\oplus \i H=\{u+\i v:\, u,v\in H\},$$
endowed with the inner product
$$\l u+\i v, u'+\i v'\r
=\l u,u'\r+\l v,v'\r+\i \l v, u'\r-\i\l u,v'\r.
$$
In a similar manner, we define the
complexification $\A$ of
$A$ to be the linear
operator on $H_\CC$ with domain
$$\D(\A)=\{u+\i v:\,
u,v\in\D(A)\}$$
acting as
$$\A (u+\i v)=Au+\i Av.$$
Since $A$ is strictly positive selfadjoint, so is $\A$, and the two spectra
$\sigma(A)$ and $\sigma(\A)$ coincide. In particular,
$\sigma(\A)\subset\R$, and the resolvent sets satisfy the equality $\rho(A)=\rho(\A)\cap\R$. Besides,
$$\alpha_0=\min\{\alpha:\, \alpha\in\sigma(\A)\}>0$$
and (as $\A$ is unbounded)
$$\sup\{\alpha:\, \alpha\in\sigma(\A)\}=\infty.$$
Being $\A$ selfadjoint, it is well known that $\sigma(\A)$
coincides with the approximate point spectrum $\sigma_{\rm ap}(\A)$, made by approximate eigenvalues
(see e.g.\ \cite{KD}); namely,
$\alpha$ belongs to the spectrum of $\A$ if and only if there is a sequence of unit vectors $w_n\in H_\CC$ such that
$$\lim_{n\to\infty}\|(\alpha-\A)w_n\|= 0.$$
If $\D(A)\Subset H$, which is the same as saying that $A^{-1}$ and
$\A^{-1}$ are compact operators, then $\sigma(\A)$ reduces to
the point spectrum $\sigma_{\rm p}(\A)$, consisting of an increasing sequence $\alpha_n\to\infty$
of eigenvalues of $\A$.

Denoting by $E_\A$ the spectral measure of $\A$ (see e.g.\ \cite{RUDIN}),
for every complex measurable
function $f$ on $\sigma(\A)$ one can define the linear operator
$$
f(\A)=\int_{\sigma(\A)}f(t)\,\d E_\A(t)
$$
with dense domain
$$
\D(f(\A))=\Big\{z\in H_\CC \,:\, \int_{\sigma(\A)}|f(t)|^2\,\d \mu^\A_{z}(t)<\infty\Big\}.
$$
Here, $\mu^\A_z$ is the finite measure on $\CC$ supported on $\sigma(\A)$ given by
$$\mu^\A_z(\Sigma)=\|E_\A(\Sigma)z\|^2,
$$
for every Borel set $\Sigma\subset\CC$.
Recall that $f(\A)$ is selfadjoint if and only if $f$ is real valued.
Furthermore,
$$\|f(\A)z\|^2=\int_{\sigma(\A)}|f(t)|^2\,\d \mu^\A_{z}(t),\quad\forall z\in\D(f(\A)).$$
In particular, for every $r>0$ we deduce the Poincar\'e type inequality
\begin{equation}
\label{POIN}
\|z\|\leq {\alpha_0^{-\frac{r}2}}\|z\|_r,\quad \forall z\in H^r_\CC.
\end{equation}

\section{Decay Types of Bounded Linear Semigroups}
\label{Stre}

\noindent
In this section, we dwell on the possible decay types of
a strongly continuous semigroup of linear operators
$$S(t)=\e^{tL}:\H\to\H$$
acting on a real Hilbert space $\H$,
with infinitesimal generator
$L:\D(L)\subset\H\to\H$. We further assume that $S(t)$ is bounded, i.e.\
$$\sup_{t\geq 0}\|S(t)\|_{{\mathfrak L}(\H)}<\infty,$$
where ${\mathfrak L}(\H)$ denotes the Banach space of bounded linear operators
on $\H$.

\begin{remark}
The complexification $\L$ of the linear operator $L$ is the infinitesimal generator
of the bounded semigroup
$$S_\CC(t)(z+\i w)=S(t)z+\i S(t)w$$
on the complex Hilbert space $\H_\CC=\H\oplus\i\H$, which satisfies
the equality
$$
\|S(t)\|_{{\mathfrak L}(\H)}=\|S_\CC(t)\|_{{\mathfrak L}(\H_\CC)},\quad\forall t\geq 0.
$$
\end{remark}

\subsection{Stability}
The semigroup $S(t)$ is said to be {\it stable} if
$$\lim_{t \to \infty} \| S(t) z\|_\H = 0, \quad \forall z \in \H.$$

\begin{remark}
As far as stability is concerned, there is no need to require the
boundedness of $S(t)$ in the hypotheses. Indeed,
as a consequence of the Uniform Boundedness Principle,
a stable semigroup is automatically bounded.
\end{remark}

For a contraction semigroup,
the following stability criterion from~\cite{ChePat} can be useful.

\begin{theorem}
\label{THM-STABLE}
Let $S(t)$ be a contraction semigroup
(i.e.\ $\|S(t)\|_{{\mathfrak L}(\H)}\leq 1$ for all $t$),
and let $\V \subset \H$ be a Hilbert space with continuous and dense embedding
(but not necessarily compact).
Suppose that, for every fixed $z\in \V$,
\begin{enumerate}
\item[{\rm (i)}] The set $\,\bigcup_{t\geq t_z} S(t)z$ is
relatively compact in $\H$ and bounded in $\V$, for some $t_z \geq 0$;
\vskip1mm
\item[{\rm (ii)}] $\| S(t)z \|_\H = \| z \|_\H$ for all $t > 0$ implies that $z=0$.
\end{enumerate}
Then $S(t)$ is stable.
\end{theorem}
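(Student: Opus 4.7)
The plan is to combine a LaSalle-type invariance argument with a weak-compactness step exploiting the Hilbert structure of $\V$. By contractivity and density, it suffices to prove that $S(t)z\to 0$ in $\H$ for every $z\in\V$: once this is established, an arbitrary $x\in\H$ is approximated by $z\in\V$ and
$$\|S(t)x\|_\H\leq \|S(t)(x-z)\|_\H+\|S(t)z\|_\H\leq \|x-z\|_\H+\|S(t)z\|_\H,$$
whose right-hand side can be made arbitrarily small by first choosing $z$ close to $x$ and then sending $t\to\infty$.

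Fix $z\in\V$. Since $\|S(t)z\|_\H$ is non-increasing in $t$ (contractivity), the limit
$$\ell=\lim_{t\to\infty}\|S(t)z\|_\H$$
exists. Introduce the $\omega$-limit set
$$\omega(z)=\bigl\{w\in\H:\ \exists\, t_n\to\infty \text{ with } S(t_n)z\to w \text{ in }\H\bigr\}.$$
By hypothesis (i), the tail orbit $\bigcup_{t\geq t_z}S(t)z$ is relatively compact in $\H$, so $\omega(z)$ is nonempty, compact in $\H$, and invariant under $S(t)$ (this last point using strong continuity of $S(t)$ on $\H$, so that $S(t+t_n)z=S(t)S(t_n)z\to S(t)w$). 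For any $w\in\omega(z)$ we therefore have $\|w\|_\H=\ell$ and $\|S(t)w\|_\H=\ell$ for every $t\geq 0$.

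The crucial step is to upgrade $w\in\H$ to $w\in\V$, so that hypothesis (ii) can be invoked. Here condition (i) pays off a second time: the sequence $S(t_n)z$ is bounded in the Hilbert space $\V$, hence possesses a subsequence converging weakly in $\V$ to some $\tilde w\in\V$. By continuity of the embedding $\V\hookrightarrow\H$, weak convergence in $\V$ entails weak convergence in $\H$, and since the same subsequence already converges strongly to $w$ in $\H$, weak limits must agree: $\tilde w=w\in\V$. Applying (ii) to $w$ now gives $w=0$, so $\ell=0$ and $\omega(z)=\{0\}$. Combined with the precompactness of the orbit in $\H$, this forces $S(t)z\to 0$ in $\H$, as otherwise a sequence $S(t_n)z$ with $\|S(t_n)z\|_\H\geq\eps$ would have a subsequence converging to a nonzero point of $\omega(z)$.

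The only delicate point is the passage $w\in\V$, which is where the assumption that $\V$ is a Hilbert space (hence reflexive) is essential; the precompactness assumption in (i) cannot be weakened to mere boundedness in $\H$ without losing the invariance and non-emptiness of $\omega(z)$, and the $\V$-boundedness part of (i) is precisely what allows (ii) to be applied. Everything else—monotonicity of the $\H$-norm, invariance of $\omega(z)$, and the density/contractivity extension—is routine.
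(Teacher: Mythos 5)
Your proof is correct. Note that the paper itself does not prove this theorem: it is quoted from the reference \cite{ChePat}, so there is no internal proof to compare against. Your argument --- reduction to $z\in\V$ by density and contractivity, the LaSalle-type $\omega$-limit set whose points have constant $\H$-norm, and above all the weak-compactness step in the reflexive space $\V$ that places the limit point back in $\V$ so that hypothesis (ii) applies --- is precisely the standard route taken in that reference, and you correctly identify the Hilbert (reflexivity) assumption on $\V$ and the $\V$-boundedness in (i) as the essential ingredients.
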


\subsection{Semiuniform stability}
The semigroup $S(t)$ is said to be {\it semiuniformly stable} if there exists a nonnegative function
$h(t)$ vanishing at infinity such that
$$
\|S(t)z\|_\H \leq h(t)\|Lz\|_\H, \quad \forall z \in \D(L).
$$

\begin{remark}
Semiuniform stability is a stronger notion than stability.
Indeed, it ensures the convergence $S(t)z\to 0$ for all $z\in\D(L)$,
and since $S(t)$ is bounded, this immediately yields the convergence
$S(t)z\to 0$ for all $z\in\H$.
\end{remark}

In light of the works of C.J.K.\ Batty and coauthors~\cite{Batty,BattyBis,Batty1},
semiuniform stability can be given equivalent formulations,
as shown in the next theorem, which also provides an effective criterion.

\begin{theorem}
\label{THM-SUSTABLE}
The following are equivalent:
\begin{enumerate}
\item[{\rm (i)}] $S(t)$ is semiuniformly stable.
\smallskip
\item[{\rm (ii)}] The imaginary axis $\i\R$ belongs to the resolvent set $\rho(\L)$.
\smallskip
\item[{\rm (iii)}] $\L$ is invertible\footnote{Since $\L$ is a closed operator,
being the infinitesimal generator of a (complex) strongly continuous semigroup,
by the Closed Graph Theorem we learn that $\L^{-1}\in{\mathfrak L(\H_\CC})$.} and
$\,\lim_{t\to \infty} \|S_\CC(t){\L}^{-1}\|_{{\mathfrak L}(\H_\CC)} = 0$.
\smallskip
\item[{\rm (iv)}] $\lim_{t \to \infty}{\| S_\CC(t)(1-\L)^{-1}\|}_{{\mathfrak L}(\H_\CC)} = 0$.
\end{enumerate}
\end{theorem}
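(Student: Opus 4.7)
The plan is to establish the four-way equivalence by closing the cycle
(iii) $\Rightarrow$ (i) $\Rightarrow$ (iv) $\Leftrightarrow$ (ii) $\Rightarrow$ (iii),
with the deep step being (iv) $\Leftrightarrow$ (ii), invoked as a black box from the Batty--Duyckaerts theory cited in the paper; everything else reduces to simple algebraic manipulations involving the resolvent identity.

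For (iii) $\Rightarrow$ (i), I factor any $z\in\D(L)$ as $z=L^{-1}(Lz)$, apply $S(t)$, and identify the real and complex operator norms via the Remark, obtaining
\[
\|S(t)z\|_\H \leq \|S_\CC(t)\L^{-1}\|_{{\mathfrak L}(\H_\CC)}\,\|Lz\|_\H,
\]
so that $h(t):=\|S_\CC(t)\L^{-1}\|_{{\mathfrak L}(\H_\CC)}$ witnesses semiuniform stability. For (i) $\Rightarrow$ (iv), I first extend the semiuniform estimate to the complexification $\D(\L)$ by splitting any $z\in\D(\L)$ into real and imaginary parts and using that the complexified inner product forces $\|S_\CC(t)z\|^2=\|S(t)\Re z\|^2+\|S(t)\Im z\|^2$ and likewise for $\|\L z\|^2$. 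I then apply the extended estimate to $z=(1-\L)^{-1}w$ for arbitrary $w\in\H_\CC$. Since $\L(1-\L)^{-1}=(1-\L)^{-1}-I$, we have $\|\L z\|\leq (1+\|(1-\L)^{-1}\|)\|w\|$, and the inequality
\[
\|S_\CC(t)(1-\L)^{-1}w\|\leq h(t)\bigl(1+\|(1-\L)^{-1}\|_{{\mathfrak L}(\H_\CC)}\bigr)\|w\|
\]
taken over unit $w$ delivers (iv).

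The equivalence (iv) $\Leftrightarrow$ (ii) is the heart of the matter: it is precisely the Tauberian theorem of Batty and coauthors \cite{Batty,BattyBis,Batty1} for bounded $C_0$-semigroups on Hilbert spaces, asserting that $\|S_\CC(t)(1-\L)^{-1}\|_{{\mathfrak L}(\H_\CC)}\to 0$ if and only if $\i\R\subset\rho(\L)$. I would cite it directly; this is the main obstacle to any self-contained proof, the underlying argument relying on Plancherel and a delicate contour deformation of the resolvent along the imaginary axis.

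Finally, (ii) $\Rightarrow$ (iii) is algebraic. Since (ii) in particular yields $0\in\rho(\L)$, the operator $\L^{-1}$ is bounded. The resolvent identity applied at the points $1$ and $0$ rearranges to $\L^{-1}=(1-\L)^{-1}[\L^{-1}-I]$. Multiplying by $S_\CC(t)$ and exploiting that $\L^{-1}-I$ commutes with the semigroup gives
\[
\|S_\CC(t)\L^{-1}\|_{{\mathfrak L}(\H_\CC)} \leq \|\L^{-1}-I\|_{{\mathfrak L}(\H_\CC)}\,\|S_\CC(t)(1-\L)^{-1}\|_{{\mathfrak L}(\H_\CC)}\longrightarrow 0
\]
by the already established (iv), which closes the cycle.
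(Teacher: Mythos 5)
Your proposal is correct, and for the two implications the paper actually proves it follows the same route: the step (iii)$\Rightarrow$(i) is the paper's ``apparent'' implication spelled out, and your (i)$\Rightarrow$(iv) is identical to the paper's argument, namely complexifying the semiuniform estimate, applying it to $(1-\L)^{-1}w$ (after noting $1\in\rho(\L)$ from boundedness of the semigroup), and bounding $\L(1-\L)^{-1}=(1-\L)^{-1}-I$ in operator norm. The one place where you genuinely diverge is the edge (ii)$\Rightarrow$(iii): the paper simply cites the equivalence (ii)$\Leftrightarrow$(iii) from Arendt--Batty and Batty's survey as a second black box, whereas you derive it yourself from the single black box (ii)$\Leftrightarrow$(iv) via the (correct) identity $\L^{-1}=(1-\L)^{-1}(\L^{-1}-I)$ and submultiplicativity of the operator norm (the commutation you invoke is not even needed for the stated inequality). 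The net effect is that your proof relies on only one external Tauberian result instead of two, at the cost of one extra but elementary resolvent computation; the paper's version is shorter on the page, yours is slightly more self-contained. Two cosmetic remarks: the Batty--Duyckaerts theorem holds on general Banach spaces, not just Hilbert spaces, and its proof is contour-integral based rather than Plancherel based, but neither point affects your argument since you use it purely as a black box.
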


\begin{proof}
The equivalence (ii)$\Leftrightarrow$(iv) is proved in \cite{Batty1},
while (ii)$\Leftrightarrow$(iii) can be found in~\cite{Batty,BattyBis}.
It is also apparent that (iii)$\Rightarrow$(i). We conclude the proof by showing the implication
(i)$\Rightarrow$(iv).
To this end, we first observe that (i) gives at once
$$\|S_\CC(t)z\|_{\H_\CC} \leq h(t)\|\L z\|_{\H_\CC}, \quad \forall z \in \D(\L).$$
Since $S_\CC(t)$ is a bounded semigroup, $1\in\rho(\L)$.
Accordingly,
$$\|S_\CC(t)(1-\L)^{-1}z \|_{\H_\CC} \leq h(t)\|\L(1-\L)^{-1}z\|_{\H_\CC}, \quad \forall z \in \H_\CC.$$
On the other hand,
$$\L(1-\L)^{-1}\in {\mathfrak L}(\H_\CC),$$
and so there exists $M\geq 0$ such that
$$\|S_\CC(t)(1-\L)^{-1}z \|_{\H_\CC} \leq h(t) M \|z\|_{\H_\CC}, \quad \forall z \in \H_\CC,$$
yielding the desired limit (iv).
\end{proof}

\subsection{Exponential or uniform stability}
The semigroup $S(t)$ is said to be {\it exponentially stable} (or {\it uniformly stable})
if there exist $K\geq 1$ and $\kappa>0$ such that
$$
\|S(t)\|_{{\mathfrak L}(\H)} \leq K\e^{-\kappa t}.
$$
Since the infinitesimal generator of an exponentially
stable semigroup is always invertible,
it is apparent that exponential stability implies semiuniform stability:
just take
$$h(t)=K\|L^{-1}\|_{{\mathfrak L}(\H)}\e^{-\kappa t}.$$

\begin{remark}
It is well know that exponential stability
occurs if and only if
$$\lim_{t \to \infty} \| S(t)\|_{{\mathfrak L}(\H)} = 0,$$
which is true if and only if
$$\| S(t_\star)\|_{{\mathfrak L}(\H)}<1,
$$
for some $t_\star>0$ (see e.g.\ \cite{PAZ}). Actually, by the Uniform Boundedness Principle,
exponential stability can be inferred whenever there exists
a nonnegative function
$k(t)$ vanishing at infinity such that
$$
\|S(t)z\|_\H \leq C_z k(t)\|z\|_\H, \quad \forall z \in \H,
$$
where the positive constant $C_z$ depends on $z$. In other words, lack of exponential stability prevents
the existence of a uniform decay pattern of the trajectories.
\end{remark}

As shown by J.\ Pr\"uss \cite{PRU}, the exponential stability of a semigroup,
no matter if bounded or not, is equivalent to the condition
$$\i\R\subset\rho(\L)
\and
\sup_{\lambda\in\R}\|(\i\lambda - \L)^{-1}\|_{{\mathfrak L}(\H_\CC)} < \infty.
$$
When (as in the present case) $S(t)$ is bounded, the result can be given
a more convenient formulation~\cite{GNP}.

\begin{theorem}
\label{THM-EXPSTABLE}
The semigroup $S(t)$ is exponentially stable if and only if
there exists $\eps>0$ such that
$$
\inf_{\lambda\in \R}\|\i\lambda z -\L z\|_{\H_\CC}\geq \eps\|z\|_{\H_\CC},\quad \forall z \in \D(\L).
$$
\end{theorem}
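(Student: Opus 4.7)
The plan is to deduce the equivalence from the Pr\"uss criterion quoted immediately above the statement: $S(t)$ is exponentially stable if and only if $\i\R\subset\rho(\L)$ and $M_\L:=\sup_{\lambda\in\R}\|(\i\lambda-\L)^{-1}\|_{{\mathfrak L}(\H_\CC)}<\infty$. Thus my task reduces to showing that the infimum condition in the statement is equivalent to these two clauses taken together.

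The forward direction is essentially algebraic. Assuming exponential stability, Pr\"uss gives $M_\L<\infty$, and for each $z\in\D(\L)$ and $\lambda\in\R$,
$$\|z\|_{\H_\CC}=\|(\i\lambda-\L)^{-1}(\i\lambda z-\L z)\|_{\H_\CC}\leq M_\L\,\|\i\lambda z-\L z\|_{\H_\CC}.$$
Taking the infimum over $\lambda$ yields the required lower bound with $\eps=1/M_\L$.

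For the reverse direction I would assume the infimum condition with constant $\eps>0$, which immediately forces each operator $\i\lambda-\L$ to be injective with closed range, but not automatically surjective. To gain surjectivity I would exploit the boundedness of $S(t)$: the Hille--Yosida estimate gives $\{\Re s>0\}\subset\rho(\L)$ with $\|(s-\L)^{-1}\|_{{\mathfrak L}(\H_\CC)}\leq M/\Re s$, where $M=\sup_t\|S(t)\|_{{\mathfrak L}(\H)}$. For $0<\sigma<\eps$ and $s=\sigma+\i\lambda$, the triangle inequality gives
$$\|(s-\L)z\|_{\H_\CC}\geq\|(\i\lambda-\L)z\|_{\H_\CC}-\sigma\|z\|_{\H_\CC}\geq(\eps-\sigma)\|z\|_{\H_\CC},$$
so $\|(s-\L)^{-1}\|_{{\mathfrak L}(\H_\CC)}\leq 1/(\eps-\sigma)$. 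Picking any sequence $\sigma_n\downarrow 0$ (with $\sigma_n<\eps/2$, say) furnishes points of $\rho(\L)$ converging to $\i\lambda$ along which the resolvent norm stays bounded by $2/\eps$; since the resolvent norm necessarily diverges along any sequence approaching $\sigma(\L)$ (Neumann series argument), this forces $\i\lambda\in\rho(\L)$. Once $\i\R\subset\rho(\L)$ is secured, the infimum condition itself delivers the uniform bound $\|(\i\lambda-\L)^{-1}\|_{{\mathfrak L}(\H_\CC)}\leq 1/\eps$, and Pr\"uss' criterion yields exponential stability.

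The main technical obstacle is precisely this transfer from the open right half-plane to the imaginary axis, since for a general generator one cannot a priori exclude residual spectrum on $\partial\rho(\L)$ even when $\i\lambda-\L$ is bounded below. The argument succeeds because the infimum hypothesis is \emph{uniform} in $\lambda$ and is robust under the small real perturbation $\sigma$; thus the classical ``resolvent blows up at the spectrum'' principle suffices to close the gap, with constants that remain uniform in $\lambda$ and therefore feed directly into Pr\"uss' criterion.
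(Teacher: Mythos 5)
Your proposal is correct and takes essentially the same route as the paper: the theorem is stated there as a reformulation of the Pr\"uss criterion valid for bounded semigroups, with the proof delegated to the citation \cite{GNP}, and your argument supplies exactly those details by reducing the statement to Pr\"uss' condition. The only nontrivial point --- transferring the uniform lower bound from the open right half-plane to the imaginary axis so as to conclude $\i\R\subset\rho(\L)$ --- is handled correctly via the perturbation $s=\sigma+\i\lambda$ and the Neumann-series fact that $\|(s-\L)^{-1}\|\geq 1/\mathrm{dist}(s,\sigma(\L))$.
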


\section{The Linear Operator $L$}
\label{Squattro}

\noindent
We define the phase space of our problem to be
$$\H = H^1 \times H \times H^1 \times H \times H$$
endowed with the (equivalent) Hilbert norm
$$\| (\varphi, {\tilde \varphi}, \psi, \tilde \psi, \theta) \|_{\H}^2 =
a\| A^{\frac12}\varphi + \psi\|^2 +
b\| \psi\|^2_1 + \rho_1\| \tilde \varphi\|^2 +
\rho_2\| \tilde \psi\|^2 + \rho_3\| \theta\|^2.$$
Then, introducing the evolution
$$Z(t) = (\varphi(t), {\tilde \varphi}(t), \psi(t), {\tilde \psi}(t), \theta(t)),$$
we rewrite system~\eqref{SYS} as the ODE in $\H$
$$\ddt Z(t) = LZ(t),$$
where the linear operator $L$ is given by
$$L
\begin{pmatrix}
\varphi\\
\tilde \varphi\\
\psi\\
\tilde \psi\\
\theta
\end{pmatrix} =
\begin{pmatrix}
{\tilde \varphi}\\
-\frac{a}{\rho_1}A^{\frac12}(A^{\frac12}\varphi + \psi) \\
\tilde \psi\\
\frac{1}{\rho_2} A(\delta A^{\gamma-1} {\theta} - b \psi ) -\frac{a}{\rho_2} (A^{\frac12}\varphi + \psi) \\
-\frac{1}{\rho_3}A(c\theta + \delta A^{\gamma-1} {\tilde \psi})
\end{pmatrix},
$$
with domain
$$\D(L) =
\begin{Bmatrix}
 z \in \H \left |
\begin{matrix}
{\tilde \varphi} \in H^1 \\
\varphi \in H^2 \\
{\tilde \psi} \in H^1 \\
\delta A^{\gamma-1} {\theta} - b \psi \in H^2\\
c\theta + \delta A^{\gamma-1} {\tilde \psi} \in H^2
\end{matrix}
\right.
\end{Bmatrix}.
$$

\smallskip
By the very definition of $\D(L)$, some additional
regularity on the components $\tilde\psi$ and $\theta$ is obtained.\footnote{Actually, further regularization
occurs for the remaining variables as well.}

\begin{proposition}
\label{PropXT-reg}
Let $z = (\varphi, \tilde\varphi, \psi, \tilde\psi, \theta) \in \D(L)$. Then
\begin{equation}
\label{tilpsireg}
\tilde \psi \in
\begin{cases}
H^{4\gamma -3}  &\text{{\rm if }}\, 1 < \gamma \le \frac{3}{2},\\
H^{2\gamma} &\text{{\rm if }}\, \gamma > \frac{3}{2},
\end{cases}
\end{equation}
and
\begin{equation}
\label{thetareg}
\theta \in
\begin{cases}
H^2  &\text{{\rm if }}\, \gamma \leq \frac{1}{2},\\
H^{3-2\gamma} &\text{{\rm if }}\, \frac{1}{2} < \gamma \leq 1,\\
H^{2\gamma -1} &\text{{\rm if }}\, \gamma>1.
\end{cases}
\end{equation}
In particular, $\theta \in H^1$ for every $\gamma \in \R$.
\end{proposition}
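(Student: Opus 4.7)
My plan is to derive the regularities of $\theta$ and $\tilde\psi$ by inverting the two nontrivial conditions defining $\D(L)$. Setting
$$f:=\delta A^{\gamma-1}\theta-b\psi\in H^2 \and g:=c\theta+\delta A^{\gamma-1}\tilde\psi\in H^2,$$
I will work within the scale $\{H^r\}_{r\in\R}$, exploiting that, since $A$ is strictly positive selfadjoint, the functional calculus makes $A^s:H^r\to H^{r-2s}$ an isomorphism for every $r,s\in\R$. The initial inputs are $\psi\in H^1$ (from $\H$) and $\tilde\psi\in H^1$ (from $\D(L)$).

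For $\theta$ I have two distinct expressions. Solving the identity for $g$ yields $c\theta=g-\delta A^{\gamma-1}\tilde\psi$; since $\tilde\psi\in H^1$, functional calculus places $A^{\gamma-1}\tilde\psi$ in $H^{3-2\gamma}$, hence $\theta\in H^{\min\{2,\,3-2\gamma\}}$. Solving the identity for $f$ gives $\theta=(1/\delta)A^{1-\gamma}(f+b\psi)$, and since $f\in H^2$ and $\psi\in H^1$, applying the power $A^{1-\gamma}$ yields $\theta\in H^{\min\{2\gamma,\,2\gamma-1\}}=H^{2\gamma-1}$. A short comparison—the exponents $3-2\gamma$ and $2\gamma-1$ coincide at $\gamma=1$—shows that the $g$-route is sharper for $\gamma\le 1$ and the $f$-route for $\gamma>1$, producing exactly the three cases of~\eqref{thetareg}.

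The regularity of $\tilde\psi$ is then needed only for $\gamma>1$. I solve $\delta A^{\gamma-1}\tilde\psi=g-c\theta$ and apply the now-smoothing power $A^{1-\gamma}$. Combining $g\in H^2$ with the freshly established $\theta\in H^{2\gamma-1}$ gives $\tilde\psi\in H^{\min\{2\gamma,\,4\gamma-3\}}$, which splits at $\gamma=3/2$ and reproduces~\eqref{tilpsireg}. The closing assertion $\theta\in H^1$ for every $\gamma\in\R$ is then a one-line check over the three ranges: $2\ge 1$ on $(-\infty,1/2]$, $3-2\gamma\ge 1$ on $(1/2,1]$, and $2\gamma-1\ge 1$ on $(1,\infty)$.

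I anticipate no genuine obstacle. The step requiring care is the bookkeeping of which identity delivers the sharper bound in each regime of $\gamma$: the useful relation switches from $g$ to $f$ precisely at $\gamma=1$, and the estimate for $\tilde\psi$ when $\gamma>1$ must chain through the improved bound $\theta\in H^{2\gamma-1}$ rather than the mere inclusion $\theta\in H$ inherited from $\H$. Once the correct identity is fixed in each range, all the shifts $H^r\to H^{r+2(1-\gamma)}$ are routine applications of the spectral calculus.
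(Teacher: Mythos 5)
Your argument is correct and follows essentially the same route as the paper's proof: for $\gamma>1$ you extract $\theta\in H^{2\gamma-1}$ from $\delta A^{\gamma-1}\theta-b\psi\in H^2$ using $\psi\in H^1$ and then chain it through $c\theta+\delta A^{\gamma-1}\tilde\psi\in H^2$ to get \eqref{tilpsireg}, while for $\gamma\leq 1$ you use $\tilde\psi\in H^1$ in the latter relation to obtain $\theta\in H^{\min\{2,\,3-2\gamma\}}$, exactly as in the paper. The only cosmetic difference is that you compute both ``routes'' for $\theta$ and compare exponents, whereas the paper simply cases on $\gamma>1$ versus $\gamma\leq 1$ from the start.
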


\begin{proof}
If $\gamma > 1$, since $\psi \in H^1$, we infer from the condition
$$\delta A^{\gamma-1} {\theta} -b \psi \in H^2$$
that $\theta \in H^{2\gamma -1}$. This observation, together with
$$c\theta + \delta A^{\gamma-1} {\tilde \psi} \in H^2,$$
imply~\eqref{tilpsireg}.
Assume next $\gamma \leq 1$. In this case, using once more the latter relation,
$$\tilde \psi \in H^1\quad\Rightarrow\quad A^{\gamma -1} \tilde \psi \in H^{3- 2\gamma}
\quad\Rightarrow\quad \theta \in H^{\min\{2,3- 2\gamma\}},$$
which is exactly~\eqref{thetareg}.
\end{proof}

Further properties of $L$ are established here below.

\begin{proposition}
\label{PropDissipative}
The operator $L$ is dissipative for every $\gamma \in \R$.
\end{proposition}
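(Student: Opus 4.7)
The plan is to compute $\l Lz, z\r_\H$ directly for an arbitrary $z = (\varphi, \tilde\varphi, \psi, \tilde\psi, \theta) \in \D(L)$ and show that everything telescopes to $-c\|\theta\|_1^2 \leq 0$. Expanding the definition of the inner product on $\H$, one obtains five summands, one per component, each built from pairings involving powers of $A$ acting on the components of $z$. The strategy is to group these summands so that all the conservative contributions cancel in pairs, leaving only the thermal dissipation produced by the third equation.

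First, the summands in which $A^{1/2}\varphi + \psi$ appears should combine as
$$a\l A^{1/2}\tilde\varphi + \tilde\psi, A^{1/2}\varphi + \psi\r - a\l A^{1/2}(A^{1/2}\varphi + \psi), \tilde\varphi\r - a\l A^{1/2}\varphi + \psi, \tilde\psi\r = 0,$$
by selfadjointness of $A^{1/2}$ acting between $H^1$ and $H$. Second, the $b$-weighted terms give $b\l A^{1/2}\tilde\psi, A^{1/2}\psi\r - b\l A\psi, \tilde\psi\r$, which vanishes because $\psi, \tilde\psi \in H^1$ and hence $\l A\psi, \tilde\psi\r = \l A^{1/2}\psi, A^{1/2}\tilde\psi\r$. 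Third, the thermal-mechanical coupling pulled from the last two slots yields $\delta \l A^\gamma \theta, \tilde\psi\r - \delta \l A^\gamma \tilde\psi, \theta\r$, which vanishes once more by selfadjointness of $A^\gamma$. The only surviving contribution is $-c\l A\theta, \theta\r = -c\|\theta\|_1^2$.

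The main technical obstacle is to justify these manipulations for arbitrary $\gamma \in \R$, since $A^\gamma \theta$ and $A^\gamma \tilde\psi$ need not lie in $H$. To handle this I would interpret the delicate brackets as duality pairings in the sense of Remark~2.1: Proposition~\ref{PropXT-reg}, combined with the fact that $\tilde\psi \in H^1$ by definition of $\D(L)$, yields $\tilde\psi \in H^{2\gamma-1}$ for every $\gamma \in \R$, while $\theta \in H^1$ always. Thus $\l A^\gamma \theta, \tilde\psi\r$ and $\l A^\gamma \tilde\psi, \theta\r$ are well-defined $H^{-s}$–$H^{s}$ pairings and coincide by selfadjointness. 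In the same spirit, $\l A(\delta A^{\gamma-1}\theta - b\psi), \tilde\psi\r$ is first rewritten as $\l A^{1/2}(\delta A^{\gamma-1}\theta - b\psi), A^{1/2}\tilde\psi\r$ using $\delta A^{\gamma-1}\theta - b\psi \in H^2$ and $\tilde\psi \in H^1$, and only then split into its $\delta$- and $b$-pieces, each understood as a duality bracket. Once every pairing is so interpreted, the cancellations listed above hold rigorously, and dissipativity $\l Lz, z\r_\H \leq 0$ follows for all $\gamma \in \R$.
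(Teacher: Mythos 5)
Your proposal is correct and follows essentially the same route as the paper: a direct computation of $\l Lz,z\r_\H$ in which the conservative terms cancel pairwise and the coupling brackets are legitimized by the extra regularity of Proposition~\ref{PropXT-reg} (the paper distributes the powers as $\l A^{\gamma-\frac12}\theta, A^{\frac12}\tilde\psi\r$ rather than invoking duality pairings, but this is the same justification). Your observation that $\tilde\psi\in H^{2\gamma-1}$ for every $\gamma$ is a correct and slightly tidier packaging of the regularity the paper uses implicitly.
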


\begin{proof}
This amounts to show that
\begin{equation*}
{\l L z, z\r}_{\H} \leq 0, \quad \forall z \in \D(L).
\end{equation*}
Indeed, given $z = (\varphi, \tilde\varphi, \psi, \tilde\psi, \theta) \in \D(L)$,
direct computations yield
\begin{equation}
\label{Real-DISSIPATIVO}
\l L z, z\r_{\H} = - c\| \theta\|^2_1,
\end{equation}
where cancelations are allowed due to the regularity of the domain.
In particular, we run across the terms
$$\l A (\delta A^{\gamma-1} {\theta} -b \psi ), {\tilde \psi}\r =
\l \delta A^{\gamma-1} {\theta} -b \psi , {\tilde \psi}\r_1
=\delta\l A^{\gamma-\frac12} {\theta},A^\frac12{\tilde \psi}\r - b{\l \psi , {\tilde \psi}\r}_1$$
and
$$
\l A(c\theta + \delta A^{\gamma-1} {\tilde \psi}),\theta\r
=\l c\theta + \delta A^{\gamma-1}\tilde \psi,\theta\r_1=
c {\|\theta\|}^2_1 + \delta \l A^\frac12\tilde \psi,A^{\gamma-\frac12}\theta\r,
$$
which make sense by virtue of~\eqref{tilpsireg}-\eqref{thetareg}.
\end{proof}

The next result concerns with the invertibility of $L$. Observe that $L$ is a closed operator:
this can be checked directly, or deduced by the subsequent Theorem~\ref{THM-LPh}.
Hence, by the Closed Graph Theorem, if $L^{-1}$ exists, it belongs to ${\mathfrak L}(\H)$ as well.

\begin{proposition}
\label{PropInvertible}
The operator $L$ is invertible if and only if $\gamma \le 1$.
\end{proposition}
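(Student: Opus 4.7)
The plan is to solve $Lz = f$ explicitly by back--substitution when $\gamma \le 1$, and to exploit the same scale bookkeeping to rule out surjectivity when $\gamma > 1$. Given $f = (f_1,f_2,f_3,f_4,f_5) \in \H$, the first, second, third and fifth components of $Lz = f$ yield at once
$$
\tilde\varphi = f_1,\qquad \tilde\psi = f_3,\qquad A^{\frac12}\varphi + \psi = -\tfrac{\rho_1}{a}A^{-\frac12}f_2,\qquad c\theta = -\rho_3 A^{-1}f_5 - \delta A^{\gamma-1}f_3,
$$
and the fourth component then determines $\psi$ in terms of $\theta$ and of $A^{\frac12}\varphi + \psi$; finally $\varphi$ is recovered by applying $A^{-\frac12}$. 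This produces a unique formal candidate $z$ for every $\gamma \in \R$, so the whole content of the statement is a regularity check along the scale $\{H^r\}$.

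Using the rule $A^s:H^r \to H^{r-2s}$, the critical condition turns out to be $A^{\gamma-1}\theta \in H^1$ (required so that the above expression for $b\psi$ lies in $H^1$); via the formula for $\theta$ this reduces to $A^{2(\gamma-1)}f_3 \in H^1$, i.e.\ $f_3 \in H^{4\gamma-3}$. The latter is automatic for every $f_3 \in H^1$ iff $4\gamma-3 \le 1$, namely iff $\gamma \le 1$. In this range all the remaining scale checks and the two domain conditions $\delta A^{\gamma-1}\theta - b\psi \in H^2$ and $c\theta + \delta A^{\gamma-1}\tilde\psi \in H^2$ defining $\D(L)$ hold by construction, from the fourth and fifth equations respectively. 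Injectivity of $L$ holds for every $\gamma \in \R$ by reading $Lz = 0$ backwards and invoking the strict positivity of $A$, so together with the Closed Graph remark preceding the statement this settles the ``if'' direction.

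For the converse, the same bottleneck provides an explicit obstruction. Since $\A$ is unbounded selfadjoint, the scale $\{H^r\}$ is strictly decreasing, so for $\gamma > 1$ one can choose $f_3 \in H^1 \setminus H^{4\gamma-3}$ via the spectral theorem (by concentrating mass on spectral projections of $\A$ associated with larger and larger windows of $\sigma(\A)$). Setting $f = (0,0,f_3,0,0)$, if $Lz = f$ admitted a solution $z \in \D(L)$, Proposition~\ref{PropXT-reg} would force $\theta \in H^{2\gamma-1}$; but the fifth equation forces $c\theta = -\delta A^{\gamma-1}f_3$, and this element lies in $H^{2\gamma-1}$ iff $f_3 \in H^{4\gamma-3}$, contradicting the choice of $f_3$.

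The main obstacle is not any difficult estimate but the identification of the critical power of $A$ hidden inside the system: since $A^{\gamma-1}$ appears simultaneously in the fourth and fifth equations, substituting the $\theta$ coming from the heat equation into the $\psi$--equation brings the operator $A^{2(\gamma-1)}$ into play, and $\gamma = 1$ is precisely the threshold at which this operator maps $H^1$ into itself.
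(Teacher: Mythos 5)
Your proof is correct and follows essentially the same route as the paper: explicit back-substitution producing the unique solution for $\gamma\le 1$ (with the critical regularity check landing on the term $A^{2(\gamma-1)}f_3$), and for $\gamma>1$ a source with $f_3\in H^1\setminus H^{4\gamma-3}$ whose would-be solution violates the extra regularity guaranteed by Proposition~\ref{PropXT-reg}. The only cosmetic difference is that you read the obstruction off the $\theta$-component through the fifth equation, whereas the paper reads it directly off $\tilde\psi=f_3$ via \eqref{tilpsireg}.
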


\begin{proof}
The operator $L$ is invertible if and only if,
for any  $f =(f_1, f_2, f_3, f_4, f_5) \in \H$,
the equation
$$L z = f$$
admits a unique solution
$z = (\varphi, {\tilde \varphi}, \psi, {\tilde \psi}, \theta) \in \D(L)$.
Componentwise, this translates into
$$
\begin{cases}
\tilde \varphi = f_1, \\
- aA^{\frac12}(A^{\frac12}\varphi + \psi)= \rho_1 f_2, \\
{\tilde \psi} = f_3, \\
A (\delta A^{\gamma-1} {\theta} -b \psi ) -a (A^{\frac12}\varphi + \psi)
=\rho_2 f_4, \\
-A(c\theta + \delta A^{\gamma-1} {\tilde \psi}) = \rho_3 f_5.
\end{cases}
$$
Assume first $\gamma>1$. Choosing $f_3\in H^1$, but not more regular, we see at once from the third equation
of the system that $\tilde \psi\in H^1$ but not more, contradicting~\eqref{tilpsireg}.
Conversely, if $\gamma\leq 1$, the explicit solution $z$ reads
\begin{align*}
&\varphi = -\frac{\rho_1}{ab}(b+aA^{-1}) A^{-1}f_2
+ \frac{\delta^2}{bc}A^{2\gamma -\frac{5}{2}}f_3 + \frac{\rho_2}{b}A^{-\frac{3}{2}}f_4
+ \frac{\rho_3\delta}{bc}A^{\gamma-\frac{5}{2}}f_5, \\
&{\tilde \varphi} = f_1, \\
&\psi = \frac{\rho_1}{b}A^{-\frac{3}{2}}f_2 - \frac{\delta^2}{bc}A^{2\gamma -2}f_3
- \frac{\rho_2}{b}A^{-1}f_4 -\frac{\rho_3\delta}{bc}A^{\gamma-2}f_5, \\
&{\tilde \psi} = f_3, \\
&\theta = - \frac{\delta}{c}A^{\gamma-1}f_3 -\frac{\rho_3}{c}A^{-1}f_5.
\end{align*}
It is apparent that $\tilde \varphi, \psi, \tilde \psi, \theta\in H^1$
and $\varphi\in H^2$. Besides,
$$\delta A^{\gamma-1}\theta - b \psi = - \rho_1 A^{-\frac32}f_2 + \rho_2A^{-1}f_4\in H^2$$
and
$$c\theta + \delta A^{\gamma-1}{\tilde\psi} = -\rho_3 A^{-1}f_5\in H^2.$$
Hence, $z\in\D(L)$.
\end{proof}

\begin{remark}
In fact, when $\gamma\leq 1$, the relation $L^{-1}\in {\mathfrak L}(\H)$
can be easily deduced by the proof above.
\end{remark}

We end the section by discussing the compactness of the embedding $\D(L)\subset \H$
in the case when $A^{-1}$ is a compact operator,\footnote{Clearly,
if the embedding $\D(A)\subset H$ is not compact, the same is true for $\D(L)\subset \H$.}
i.e.\ $\D(A)\Subset H$.

\begin{proposition}
\label{PropCompact}
Assume that $A^{-1}$ is a compact operator.
Then $\D(L)\Subset \H$ if and only if $\gamma<1$.
\end{proposition}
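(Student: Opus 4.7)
\emph{Plan.} The strategy is to recast the embedding $\D(L)\subset\H$ (with the graph norm) into the equivalent statement that $(\lambda-L)^{-1}$ is compact on $\H$ for some $\lambda\in\rho(L)$, and then to exploit the explicit block expression for $L^{-1}$ derived in the proof of Proposition~\ref{PropInvertible}.

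\emph{Sufficiency ($\gamma<1$).} For $\gamma\leq 1$ the operator $L$ is invertible and $L^{-1}\in\mathfrak{L}(\H)$ is presented as a $5\times 5$ block matrix whose nontrivial entries are either the identity inclusions $H^1\hookrightarrow H$ sitting at the $(2,1)$ and $(4,3)$ positions (these come from $\tilde\varphi=f_1$ and $\tilde\psi=f_3$) or constant multiples of powers $A^{-s}$. Under the standing hypothesis that $A^{-1}$ is compact, both classes of operators are compact between the relevant Sobolev slots of $\H$ whenever the exponent $s$ is strictly positive. The only entry whose exponent may degenerate is the $(3,3)$ block $-\frac{\delta^2}{bc}A^{2\gamma-2}\colon H^1\to H^1$; for $\gamma<1$ it sends $H^1$ into $H^{5-4\gamma}$ with $5-4\gamma>1$, hence is compact. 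Thus $L^{-1}$ is compact, and so $\D(L)\Subset\H$.

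\emph{Necessity ($\gamma\geq 1$).} If $\gamma=1$, the $(3,3)$ block of $L^{-1}$ becomes $-\frac{\delta^2}{bc}I$, a nonzero scalar multiple of the identity on the infinite-dimensional space $H^1$. Testing $L^{-1}$ on inputs of the form $(0,0,f_3,0,0)\in\H$ and projecting on the third slot of $\H$ isolates precisely this block (all other contributions vanish on such inputs), producing a bounded non-compact operator on $H^1$; hence $L^{-1}$ is not compact and $\D(L)\not\Subset\H$. If $\gamma>1$, Proposition~\ref{PropInvertible} gives $0\in\sigma(L)$, but $L$ remains injective: the equation $Lz=0$ forces, in order, $\tilde\varphi=\tilde\psi=0$, then $A^{\frac12}\varphi+\psi=0$, then $\theta=0$ (from the fifth component, using injectivity of $A$), and finally $\psi=\varphi=0$. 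Thus $0\in\sigma(L)\setminus\sigma_p(L)$. If $\D(L)\Subset\H$ were to hold, then the operator $(1-L)^{-1}$ (which exists because $1\in\rho(L)$, $L$ generating a contraction semigroup) would be compact on $\H$; Riesz--Schauder would then force $1\in\sigma((1-L)^{-1})$ to be an eigenvalue, yielding $0\in\sigma_p(L)$, a contradiction.

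\emph{Main obstacle.} The critical case is $\gamma=1$, where $L$ is still invertible and one might naively hope for compactness of $L^{-1}$; the failure is caused by a single identity-like block on an infinite-dimensional space. The delicate point is ruling out any cancellation with the other (compact) blocks, which is handled cleanly by the restriction/projection argument above.
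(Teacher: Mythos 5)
Your proof is correct, but it takes a genuinely different route from the paper's. The paper works directly on the domain: for $\gamma\geq 1$ it exhibits an explicit sequence $z_n$ with $\psi_n=\frac{\delta}{b}u_n$ ($u_n$ an orthonormal basis of $H^1$) and $\tilde\psi_n,\theta_n$ chosen so that the two combinations $\delta A^{\gamma-1}\theta_n-b\psi_n$ and $c\theta_n+\delta A^{\gamma-1}\tilde\psi_n$ vanish identically, making $z_n$ bounded in the graph norm while $\psi_n$ has no convergent subsequence in $H^1$; this covers $\gamma=1$ and $\gamma>1$ in one stroke. For $\gamma<1$ the paper extracts convergent subsequences slot by slot from a bounded sequence in $\D(L)$, the only delicate slot being $\psi_n$, recovered from the precompactness of $\delta A^{\gamma-1}\theta_n-b\psi_n$ in $H^1$ together with the convergence of $A^{\gamma-1}\theta_n$ in $H^1$. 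You instead reduce everything to compactness of the resolvent and read it off the explicit block matrix for $L^{-1}$ from Proposition~\ref{PropInvertible}; your identification of the $(3,3)$ block $-\frac{\delta^2}{bc}A^{2\gamma-2}$ as the unique entry degenerating at $\gamma=1$ checks out (all other blocks remain compact up to at least $\gamma=\frac54$), and the restriction/projection argument correctly prevents any cancellation with the compact blocks. For $\gamma>1$, where no inverse is available, your detour through the injectivity of $L$ and Riesz--Schauder applied to $(1-L)^{-1}$ is sound --- it is in effect a re-derivation of the Kato theorem the paper invokes later in Theorem~\ref{SEMSTAB1}. What your approach buys is a clean conceptual link between $\D(L)\Subset\H$ and compactness of the resolvent, with a sharp localization of the obstruction; what it costs is a case split at $\gamma=1$ versus $\gamma>1$ and dependence on the explicit formula for $L^{-1}$, whereas the paper's single kernel-of-the-constraints sequence is more elementary and uniform in $\gamma\geq1$.
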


\begin{proof}
First, we provide a counterexample to compactness when $\gamma \geq 1$.
Choose an orthonormal basis $u_n$ of $H^1$
and any two bounded sequences $\varphi_n \in H^2$ and $\tilde\varphi_n \in H^1$.
Then, define the sequence $z_n = (\varphi_n,\tilde\varphi_n, \psi_n, \tilde\psi_n,\theta_n)$,
where
$$\psi_n = \frac{\delta}{b}u_n, \qquad \tilde\psi_n = -\frac{c}{\delta}A^{2-2\gamma}u_n, \qquad \theta_n = A^{1-\gamma}u_n.$$
Since $\gamma \geq 1$, it is readily seen that $\tilde\psi_n,\theta_n \in H^1$.
Moreover, by construction,
$$\delta A^{\gamma-1}\theta_n -b \psi_n = 0, \qquad
c\theta_n + \delta A^{\gamma-1}{\tilde \psi_n} = 0.$$
Thus $z_n$ is a bounded -in the norm of $\D(L)$- sequence in $\D(L)$, whose component $\psi_n$ has no convergent subsequence in $H^1$.

Assume next $\gamma< 1$, and let
$z_n = (\varphi_n, \tilde\varphi_n, \psi_n, \tilde\psi_n, \theta_n)$ be bounded in $\D(L)$.
In particular, $\varphi_n$ is bounded in $H^2 \Subset H^1$, whereas
$\tilde\varphi_n,\tilde\psi_n,\theta_n$ are bounded in $H^1 \Subset H$. Accordingly,
there exist $\varphi\in H^1$ and $\tilde\varphi,\tilde\psi,\theta\in H$ such that, up to a subsequence,
$$\varphi_n  \to \varphi \quad\text{in } H^1$$
and
$$
\tilde\varphi_n \to \tilde\varphi \quad\text{in } H,\qquad
\tilde\psi_n \to \tilde\psi \quad\text{in } H,\qquad
\theta_n  \to \theta \quad\text{in } H.
$$
We are left to prove the convergence $\psi_n\to \psi$ in $H^1$, for some $\psi\in H^1$.
Indeed, knowing that
$$\delta A^{\gamma-1}\theta_n-b \psi_n \quad\text{is bounded in } H^2,$$
we get the convergence, up to a subsequence,
$$\delta A^{\gamma-1}\theta_n-b\psi_n \to\eta \quad\text{in } H^1,$$
for some $\eta\in H^1$.
At the same time, since $\gamma<1$,
$$ A^{\gamma-1}\theta_n\to A^{\gamma-1}\theta\quad\text{in } H^1,$$
so implying the desired convergence.
\end{proof}

\begin{remark}
It is clear that all the results above remain valid for the complexification $\L$
acting on $\H_\CC$ as well,
the only difference being the dissipative estimate~\eqref{Real-DISSIPATIVO},
which becomes
\begin{equation}
\label{Cplx-DISSIPATIVO}
\Re \l \L z, z\r_{\H_\CC} = - c\| \theta\|^2_1, \quad \forall z \in \D(\L).
\end{equation}
\end{remark}

\section{The Contraction Semigroup}
\label{Scinque}

\noindent
The next step is showing that $L$ generates a semigroup.

\begin{theorem}
\label{THM-LPh}
For every fixed $\gamma \in \R$,
the linear operator $L$ is the infinitesimal generator of a strongly continuous
semigroup
$$S(t)=\e^{t L}:\H\to\H$$
of linear contractions.
\end{theorem}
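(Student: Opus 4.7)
The plan is to apply the Lumer--Phillips generation theorem. Three conditions must be verified: density of $\D(L)$ in $\H$, dissipativity of $L$, and surjectivity of $\lambda I - L$ for some $\lambda > 0$. Density is clear, because $\D(L)$ contains every quintuple whose components all lie in $\D(A^{k})$ for $k$ sufficiently large (depending on $\gamma$), and such quintuples are dense in $\H$. Dissipativity is precisely Proposition~\ref{PropDissipative}. Moreover, since $L$ is closed and dissipative, $\lambda I - L$ is injective with closed range for every $\lambda > 0$, and the set of $\lambda > 0$ at which it is actually surjective is open; so the task reduces to producing a single such $\lambda$.

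To establish the range condition I would fix $\lambda = 1$ and, for arbitrary $f = (f_1,\ldots,f_5) \in \H$, solve $(I - L)z = f$. The first and third scalar equations yield $\tilde\varphi = \varphi - f_1$ and $\tilde\psi = \psi - f_3$ explicitly; substituting reduces the problem to a coupled elliptic system in the remaining unknowns $(\varphi, \psi, \theta)$. Since $I + (c/\rho_3) A$ is a bounded bijection $H^1 \to H^{-1}$, the heat-type equation allows me to express $\theta$ affinely in terms of $\psi$ and the data, leaving a closed elliptic problem for $(\varphi, \psi) \in H^1 \times H^1$. I would cast this in variational form and invoke the Lax--Milgram lemma. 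The remaining conditions defining $\D(L)$, which involve $A^{\gamma-1}$, are imposed not on $\theta$ and $\tilde\psi$ separately but only on the combinations $\delta A^{\gamma-1}\theta - b\psi$ and $c\theta + \delta A^{\gamma-1}\tilde\psi$; these are then automatic, since rewriting the fifth equation as $A(c\theta + \delta A^{\gamma-1}\tilde\psi) = \rho_3(f_5 - \theta)$ places this combination in $H^2$ as soon as $\theta \in H$, and similarly for the other one from the fourth equation.

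The main obstacle I anticipate is coercivity of the variational form when $\gamma > 1$. In that range Proposition~\ref{PropInvertible} shows that $L$ itself is not invertible, and the coupling term $\delta A^\gamma$ is formally of higher differential order than the leading elliptic operator $A$. The positive $\lambda$-contributions, in particular the $\lambda\theta$ term in the heat equation, will need to be exploited to compensate this excess order and deliver coercivity on $H^1 \times H^1$. By contrast, when $\gamma \le 1$ no variational work is required at all: Proposition~\ref{PropInvertible} already gives $0 \in \rho(L)$, and by openness of the resolvent set of a closed dissipative operator some $\lambda > 0$ automatically lies in $\rho(L)$, completing the verification of the Lumer--Phillips hypotheses.
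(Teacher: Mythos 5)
Your overall strategy (Lumer--Phillips, dissipativity from Proposition~\ref{PropDissipative}, and solving $(1-L)z=f$ after eliminating $\tilde\varphi$ and $\tilde\psi$) is the same as the paper's, and your treatment of the case $\gamma\le 1$ is correct and coincides with the remark following the paper's proof: $0\in\rho(L)$ by Proposition~\ref{PropInvertible}, hence by openness of the resolvent set some $\lambda>0$ lies in $\rho(L)$, and the (modified) Lumer--Phillips theorem applies. Your observation that the two domain conditions involving $A^{\gamma-1}$ come for free from the fourth and fifth equations is also sound. The gap is in the case $\gamma>1$, and you have misdiagnosed where it lies. Eliminating $\theta$ via $\theta=(\rho_3+cA)^{-1}(\rho_3 f_5-\delta A^{\gamma}\tilde\psi)$ adds to the $\psi$-equation the operator $\delta^2A^{2\gamma}(\rho_3+cA)^{-1}$, whose symbol $\delta^2t^{2\gamma}/(\rho_3+ct)$ is \emph{nonnegative}; coercivity is therefore never the obstacle. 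The real obstacle is that this symbol grows like $t^{2\gamma-1}$, so for $\gamma>1$ the associated quadratic form is infinite on part of $H^1$: the bilinear form is not even defined, let alone continuous, on $H^1\times H^1$, and its natural form domain in the $\psi$-variable is $H^{2\gamma-1}$. The zeroth-order $\lambda$-contributions are bounded perturbations and cannot enlarge a form domain, so they cannot ``compensate the excess order'' as you propose. Moreover, even granting a solution with $\psi\in H^1$ only, for $\gamma>\tfrac32$ the formula for $\theta$ would give merely $\theta\in H^{3-2\gamma}$, which is not contained in $H$: to conclude $z\in\D(L)$ (indeed even $z\in\H$) you must recover the extra regularity \eqref{tilpsireg}--\eqref{thetareg} of $\tilde\psi$ and $\theta$, which a Lax--Milgram solution in $H^1\times H^1$ does not provide.

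For comparison, the paper inverts the reduced scalar equation for $\tilde\psi$ explicitly through the functional calculus of $\A$: the symbol $w(t)=\rho_2+a+bt-a^2t/(\rho_1+at)+\delta^2t^{2\gamma}/(\rho_3+ct)$ satisfies $w(t)\ge\rho_2$ and grows like $t^{2\gamma-1}$ when $\gamma>1$, and this growth is precisely what delivers solvability and the higher regularity of $\tilde\psi$ (hence of $\theta$) in a single stroke. If you wish to keep a variational argument, you must take the trial space for the $\psi$-component to be $H^{\max\{1,\,2\gamma-1\}}$, check that the datum lands in its dual (it does, using $h\in H^{2-2\gamma}$ for $\gamma>\tfrac32$ and $h\in H^{-1}$ otherwise), and then still carry out the regularity bootstrap; as written, your plan fails for every $\gamma>1$.
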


The proof is carried out via an application of the
classical Lumer-Phillips Theorem (see \cite{PAZ}).

\begin{theorem}[Lumer-Phillips]
\label{LP}
The operator $L$ is the infinitesimal generator of a contraction semigroup $S(t)=\e^{t L}$
on $\H$ if and only if
\begin{itemize}
\item[(i)] $L$ is dissipative; and
\smallskip
\item[(ii)] ${\rm ran }(1-L)=\H$.
\end{itemize}
\end{theorem}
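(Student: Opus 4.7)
The plan is to treat the two implications separately, with the forward direction being a short computation and the reverse direction reducing to the Hille--Yosida theorem. For the forward implication, assuming $L$ generates the contraction semigroup $S(t)$, I would derive dissipativity by differentiating $\|S(t)z\|_\H^2 \leq \|z\|_\H^2$ at $t=0^+$: for $z\in\D(L)$,
$$2\,\Re\,\l L z, z\r_\H = \lim_{t\to 0^+} \frac{\|S(t)z\|_\H^2 - \|z\|_\H^2}{t} \leq 0.$$
The range condition then follows from the standard Laplace-transform formula
$$(\lambda -L)^{-1}z = \int_0^\infty \e^{-\lambda t}\, S(t)z\,\d t, \quad \Re\,\lambda>0,$$
which converges absolutely thanks to contractivity and is easily checked to invert $\lambda - L$; taking $\lambda=1$ yields $1\in\rho(L)$, hence ${\rm ran}(1-L)=\H$.

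For the reverse implication, I would start from the dissipativity identity
$$\|(1-L)z\|_\H^2 = \|z\|_\H^2 - 2\,\Re\,\l Lz,z\r_\H + \|Lz\|_\H^2 \geq \|z\|_\H^2, \quad \forall z\in\D(L),$$
which, together with the assumption ${\rm ran}(1-L)=\H$, makes $1-L$ a bijection onto $\H$ with bounded inverse of norm at most $1$; in particular $1\in\rho(L)$ and $L$ is automatically closed. To upgrade this single-point information to the full Hille--Yosida resolvent estimate on the positive real axis, I would run a connectedness argument: by the Neumann series, if $\lambda_0>0$ lies in $\rho(L)$ with $\|(\lambda_0-L)^{-1}\|_{{\mathfrak L}(\H)}\leq 1/\lambda_0$, then every $\lambda>0$ with $|\lambda-\lambda_0|<\lambda_0$ is also in $\rho(L)$, and repeating the dissipativity computation with $\lambda$ in place of $1$ recovers the sharp bound $\|(\lambda-L)^{-1}\|_{{\mathfrak L}(\H)}\leq 1/\lambda$. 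The subset of $(0,\infty)$ where this estimate holds is therefore nonempty, open and closed, and consequently equals $(0,\infty)$.

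Having established that $L$ is closed with $(0,\infty)\subset\rho(L)$ and $\|\lambda(\lambda-L)^{-1}\|_{{\mathfrak L}(\H)}\leq 1$ for every $\lambda>0$, I would conclude by appealing to Hille--Yosida: build the Yosida approximants $L_n = nL(n-L)^{-1}=n^2(n-L)^{-1}-n$, which are bounded operators generating equicontractive semigroups $\e^{tL_n}$, show that $L_n z\to Lz$ for every $z\in\D(L)$, and prove that $\e^{tL_n}z$ is strongly Cauchy in $n$ uniformly on bounded time intervals; the limit then defines the sought contraction semigroup $S(t)=\e^{tL}$ with infinitesimal generator $L$. The main obstacle will be twofold. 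First, density of $\D(L)$ in $\H$ is tacitly required for the Hille--Yosida machinery and has to be either assumed as part of the hypotheses or extracted from the dissipativity-plus-range assumption via a maximality argument in the Hilbert setting (one shows that any nontrivial $w\in \D(L)^\perp$ would allow a proper dissipative extension of $L$, contradicting the surjectivity of $1-L$). Second, the Yosida convergence analysis, though standard, is the only genuinely analytic step and must be carried out carefully, in particular using the resolvent identity to get the commutation of $L_n$ with $L$ and the uniform-in-$n$ control of $\|\e^{tL_n}\|_{{\mathfrak L}(\H)}\leq 1$.
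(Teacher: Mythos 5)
Your proposal is a correct outline of the standard textbook proof of the Lumer--Phillips theorem, but note that the paper does not prove this statement at all: it is quoted as a classical result and delegated to the literature (Pazy \cite{PAZ}), and only its hypotheses (i) and (ii) are verified for the specific operator $L$ in Propositions \ref{PropDissipative} and the surjectivity lemma. So there is nothing in the paper to compare against step by step; what you are supplying is the omitted classical argument. Within that argument, the forward direction (differentiating $\|S(t)z\|_\H^2$ at $t=0^+$ and the Laplace-transform representation of the resolvent) and the reverse direction (the estimate $\|(\lambda-L)z\|_\H\geq\lambda\|z\|_\H$, the open-closed propagation of the resolvent estimate from $\lambda=1$ to all of $(0,\infty)$, and the Yosida approximation scheme) are all standard and sound. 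The one genuine issue you correctly flag is the density of $\D(L)$, which the statement does not list as a hypothesis: in a Hilbert (indeed reflexive) space it is automatic from dissipativity plus ${\rm ran}(1-L)=\H$, and it can be settled even more directly than by your extension-maximality argument --- if $w\perp\D(L)$, write $w=(1-L)z$ with $z\in\D(L)$; then $0=\l w,z\r_\H=\|z\|_\H^2-\l Lz,z\r_\H\geq\|z\|_\H^2$, so $z=0$ and $w=0$. With that observation inserted, your sketch compiles into a complete proof; it is simply a proof the authors chose not to reproduce.
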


Indeed, point (i) is exactly the content of
Proposition~\ref{PropDissipative}. Accordingly, Theorem~\ref{THM-LPh}
follows from the next lemma, establishing (ii).

\begin{lemma}
The operator $1 -L: \D(L) \subset \H \to \H$ is onto.
\end{lemma}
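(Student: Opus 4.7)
The goal is to show that for every $f=(f_1,f_2,f_3,f_4,f_5)\in\H$ the equation $(1-L)z=f$ has a solution $z=(\varphi,\tilde\varphi,\psi,\tilde\psi,\theta)\in\D(L)$. The plan is first to read the five components of this equation: the first and third immediately give $\tilde\varphi=\varphi-f_1$ and $\tilde\psi=\psi-f_3$, while the remaining three (after multiplying the second, fourth and fifth by $\rho_1$, $\rho_2$ and $\rho_3$, respectively, and substituting the above expressions for $\tilde\varphi,\tilde\psi$) reduce to the closed subsystem
\begin{align*}
\rho_1\varphi+aA\varphi+aA^{\frac12}\psi&=\rho_1(f_1+f_2),\\
aA^{\frac12}\varphi+(\rho_2+a)\psi+bA\psi-\delta A^\gamma\theta&=\rho_2(f_3+f_4),\\
\delta A^\gamma\psi+\rho_3\theta+cA\theta&=\rho_3 f_5+\delta A^\gamma f_3
\end{align*}
in the three unknowns $(\varphi,\psi,\theta)$.

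The strategy for solving this reduced system is to pass to the spectral representation of the complexified operator $\A$. Since $\A$ is strictly positive selfadjoint, the system becomes a measurable family of $3\times 3$ algebraic systems $M(\alpha)\hat w(\alpha)=\hat g(\alpha)$ indexed by $\alpha\in\sigma(\A)\subset[\alpha_0,\infty)$, where
$$M(\alpha)=\begin{pmatrix}\rho_1+a\alpha & a\alpha^{\frac12} & 0\\ a\alpha^{\frac12} & \rho_2+a+b\alpha & -\delta\alpha^\gamma\\ 0 & \delta\alpha^\gamma & \rho_3+c\alpha\end{pmatrix}.$$
A cofactor expansion along the last column yields
$$\det M(\alpha)=(\rho_3+c\alpha)\bigl[\rho_1\rho_2+a\rho_1+(b\rho_1+a\rho_2)\alpha+ab\alpha^2\bigr]+\delta^2\alpha^{2\gamma}(\rho_1+a\alpha),$$
strictly positive for every $\alpha\geq\alpha_0$, so $M(\alpha)$ is pointwise invertible. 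Moreover, the same dissipative computation as in Proposition~\ref{PropDissipative} applied fiberwise delivers, relative to the natural fiber norm on $\H$, the uniform bound $\|M(\alpha)^{-1}\|\leq 1$, which gives the spectral integrability needed to assemble $(\varphi,\psi,\theta)$ as a genuine element of the appropriate product of Sobolev spaces.

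It remains to verify that the constructed $z=(\varphi,\varphi-f_1,\psi,\psi-f_3,\theta)$ actually lies in $\D(L)$. Once $\varphi,\psi\in H^1$ and $\theta\in H$ are in hand, the second original equation forces $A^{\frac12}\varphi+\psi\in H^1$ and hence $\varphi\in H^2$; the fourth then forces $\delta A^{\gamma-1}\theta-b\psi\in H^2$, and the fifth forces $c\theta+\delta A^{\gamma-1}\tilde\psi\in H^2$. I expect the main obstacle to lie in the regime $\gamma>1$, where the entries of $M(\alpha)^{-1}$ producing $\hat\theta$ from $\hat f_3$ grow like $\alpha^{\gamma-1}$ at infinity, so a careful bookkeeping of the cofactor formula is needed to confirm that $\theta$ attains the regularity predicted by Proposition~\ref{PropXT-reg} and that the relevant spectral integrals converge. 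An alternative variational route via Lax--Milgram on $H^1\times H^1\times H^1$ works painlessly for $\gamma\leq 1$ (the $\delta$-coupling terms are antisymmetric and cancel in the diagonal evaluation, yielding coercivity after using the Poincar\'e inequality \eqref{POIN}), but the spectral route has the advantage of covering all $\gamma\in\R$ uniformly.
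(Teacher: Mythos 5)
Your reduction is sound and follows essentially the paper's own strategy: eliminate two variables, pass to the spectral resolution of $\A$, and invert a $3\times 3$ algebraic system fiberwise (your $\det M(\alpha)$ coincides with the paper's denominator $v(t)$). The genuine gap is the sentence claiming that a uniform bound on $\|M(\alpha)^{-1}\|$ ``gives the spectral integrability needed to assemble $(\varphi,\psi,\theta)$ as a genuine element of the appropriate product of Sobolev spaces.'' A uniform fiberwise operator-norm bound only converts square-integrability of the data fibers against $\mu^\A_z$ into square-integrability of the solution fibers, i.e.\ it would place $(\varphi,\psi,\theta)$ in $H\times H\times H$ provided the data lie in $H\times H\times H$. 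Neither the hypothesis nor the conclusion is adequate here: the right-hand side of your third reduced equation contains $\delta A^\gamma f_3$ with $f_3\in H^1$ only, which fails to belong to $H$ as soon as $\gamma>\frac12$; and what must be proved is not $z\in\H$ but $z\in\D(L)$, namely $\varphi\in H^2$, $\tilde\varphi,\psi,\tilde\psi\in H^1$, together with the composite conditions $\delta A^{\gamma-1}\theta-b\psi\in H^2$ and $c\theta+\delta A^{\gamma-1}\tilde\psi\in H^2$. Establishing these requires graded estimates on the individual entries of $M(\alpha)^{-1}$ (cofactors over $\det M$) and on the specific combinations above, exploiting the decay and cancellations in those ratios as $\alpha\to\infty$. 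That asymptotic bookkeeping is precisely the substance of the paper's proof (the ratios $p_\imath/v$, $q_\imath/v$, $r_\imath/v$ and their growth orders), and it is exactly the step you defer with ``a careful bookkeeping of the cofactor formula is needed.'' As written, the argument is therefore incomplete at its decisive point.

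Two smaller remarks. The constant in $\|M(\alpha)^{-1}\|\leq 1$ is not what dissipativity gives: taking real parts, $\Re\l M(\alpha)x,x\r=\rho_1|x_1|^2+\rho_2|x_2|^2+(\rho_3+c\alpha)|x_3|^2+a|\alpha^{1/2}x_1+x_2|^2+b\alpha|x_2|^2\geq\min\{\rho_1,\rho_2,\rho_3\}|x|^2$, so the uniform bound is $1/\min\{\rho_1,\rho_2,\rho_3\}$ in the Euclidean fiber norm; moreover there is no ``natural fiber norm on $\H$'' for the triple $(\varphi,\psi,\theta)$, since the $\H$-norm involves the eliminated variables. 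Your Lax--Milgram alternative for $\gamma\leq 1$ does produce a weak solution (note the coupling between the first two equations is symmetric rather than antisymmetric, but it assembles into $a\|A^{\frac12}\varphi+\psi\|^2$ and coercivity holds), yet it too leaves the $\D(L)$-regularity to be verified, so it does not bypass the bookkeeping either.
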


\begin{proof}
For $f= (f_1, f_2, f_3, f_4, f_5) \in \H$,
we look for a solution $z = (\varphi, \tilde\varphi, \psi, \tilde\psi, \theta) \in \D(L)$ to the equation
$$
z - L z = f,
$$
which, componentwise, reads
\begin{align*}
&\varphi - \tilde\varphi = f_1, \\
&\rho_1\tilde\varphi  + a A^\frac12(A^\frac12\varphi + \psi)= \rho_1 f_2, \\
&\psi - \tilde\psi = f_3, \\
&\rho_2\tilde\psi + A (b\psi - \delta A^{\gamma-1} \theta) + a (A^\frac12\varphi + \psi) = \rho_2 f_4, \\
&\rho_3\theta + A(c\theta + \delta A^{\gamma-1} \tilde\psi )= \rho_3 f_5.
\end{align*}
Substituting the first and the third equation of the system above
into the second and the fourth one, respectively, we obtain
\begin{align}
\label{1-LP}
&( \rho_1 + a A) \tilde\varphi + a A^\frac12\tilde\psi = h_1, \\
\label{2-LP}
&a A^\frac12\tilde\varphi + ( \rho_2 + a + bA)\tilde\psi - \delta A^\gamma\theta = h_2,\\
\label{3-LP}
&\delta A^\gamma \tilde\psi +( \rho_3 + c A)\theta = h_3,
\end{align}
where
\begin{align*}
h_1 &= -a A f_1 + \rho_1f_2 - a A^\frac12 f_3\in H^{-1}, \\
h_2 &= -a A^\frac12f_1 - (a + bA) f_3 + \rho_2f_4\in H^{-1}, \\
h_3 &= \rho_3f_5\in H.
\end{align*}
Collecting equations \eqref{1-LP} and \eqref{3-LP}, we learn that
\begin{align}
\label{PHITILDE-LP}
&\tilde\varphi = ( \rho_1 + a A )^{-1}(h_1 - a A^\frac12\tilde\psi), \\
\label{THETA-LP}
&\theta = (\rho_3 + c A)^{-1}(h_3 - \delta A^\gamma \tilde\psi).
\end{align}
Substituting \eqref{PHITILDE-LP}-\eqref{THETA-LP} into \eqref{2-LP},
and exploiting the Functional Calculus of $\A$, we obtain
\begin{equation}
\label{PSITILDE-LP}
\tilde\psi = \int_{\sigma(\A)} \frac{1}{w(t)}\,\d E_\A(t)\,h,
\end{equation}
where
$$
w(t) = \rho_2 + a + bt - \frac{a^2t}{\rho_1 + a t} +\frac{\delta^2 t^{2\gamma}}{\rho_3 + c t}
$$
and
$$
h= h_2 - a A^\frac12( \rho_1 + a A )^{-1}h_1 + \delta A^\gamma(\rho_3 + c A)^{-1}h_3.
$$
Thus,
$$
h\in
\begin{cases}
H^{2- 2\gamma} 	&\text{if } \gamma > \frac{3}{2},\\
H^{-1}			&\text{if } \gamma \leq \frac{3}{2}.
\end{cases}
$$
Observing that $w(t)\geq \rho_2$ for every $t\in\sigma(\A)$ and,
in the limit $t\to\infty$,
$$
w(t) \approx
\begin{cases}
t^{2\gamma-1} 	&\text{if } \gamma > 1,\\
t				&\text{if } \gamma \leq 1,
\end{cases}
$$
we infer that $\tilde\psi\in H^1$, and fulfills \eqref{tilpsireg} as well.
As a byproduct, $\psi=\tilde\psi+f_3\in H^1$.
At this point, we learn from \eqref{PHITILDE-LP}-\eqref{THETA-LP} that
$\tilde\varphi \in H^1$, and $\theta\in H^1$ at least.\footnote{Indeed, one can deduce at this stage
the regularity \eqref{thetareg} only for $\gamma\leq \tfrac32$, whereas for $\gamma>\tfrac32$
equation \eqref{THETA-LP} merely gives $\theta\in H^2$, which is not optimal.}
In order to show that $\varphi=\tilde\varphi+f_1\in H^2$,
we collect \eqref{PHITILDE-LP} and \eqref{PSITILDE-LP}. This entails the explicit expression
\begin{align*}
\varphi &=
\int_{\sigma(\A)} \frac{p_1(t)}{v(t)}\,\d E_\A(t)\,f_1 +
\int_{\sigma(\A)} \frac{p_2(t)}{v(t)}\, \d E_\A(t)\, f_2
+ \int_{\sigma(\A)} \frac{p_3(t)}{v(t)}\,\d E_\A(t)\, f_3\\
&\quad +\int_{\sigma(\A)} \frac{p_4(t)}{v(t)}\, \d E_\A(t)\, f_4 +
\int_{\sigma(\A)} \frac{p_5(t)}{v(t)}\, \d E_\A(t)\, f_5,
\end{align*}
where
$$
v(t)= (\rho_1 + at)(\rho_2 + bt)(\rho_3+ct) + a\rho_1 (\rho_3+ct) + \delta^2 t^{2\gamma}(\rho_1 + at),\\
$$
and
\begin{align*}
p_1(t) &= \rho_1 [(\rho_2 + bt)(\rho_3 + ct) + a(\rho_3 + ct) + \delta^2 t^{2\gamma}], \\
p_2(t) &= \rho_1 [(\rho_2 + a + bt)(\rho_3 + ct) + \delta^2 t^{2\gamma}], \\
p_3(t) &= -at^\frac12[\rho_2(\rho_3 + ct) +\delta^2 t^{2\gamma}], \\
p_4(t) &= -at^\frac12\rho_2(\rho_3 + ct),\\
p_5(t) &= -\rho_3\delta at^{\gamma + \frac12}.
\end{align*}
Note that $v(t)$ is away from zero for $t\in\sigma(\A)$
and, as $t\to\infty$,
$$
v(t) \approx
\begin{cases}
t^{2\gamma+1} 	&\text{if } \gamma > 1,\\
t^3				&\text{if } \gamma \leq 1.
\end{cases}
$$
It is then readily seen that, for every $\gamma\in\R$,
$$\limsup_{t\to\infty}\, t \,\frac{p_\imath(t)}{v(t)}<\infty, \quad\text{for }\imath=1,2,4,5,$$
and
$$\limsup_{t\to\infty}\, t^\frac12 \,\frac{p_3(t)}{v(t)}<\infty.
$$
Recalling in particular that $f_3\in H^1$, we draw the desired conclusion $\varphi\in H^2$.
To finish the proof
we are left to verify the relations
$$\delta A^{\gamma-1}\theta - b\psi\in H^2\and
c\theta + \delta A^{\gamma-1}\tilde\psi\in H^2.$$
Again, by explicit calculations we obtain
\begin{align*}
\delta A^{\gamma-1}\theta - b\psi &=
\int_{\sigma(\A)} \frac{q_1(t)}{v(t)}\,\d E_\A(t)\, (f_1 + f_2) +
\int_{\sigma(\A)} \frac{q_2(t)}{v(t)}\, \d E_\A(t)\, f_3 \\
&\quad + \int_{\sigma(\A)} \frac{q_3(t)}{v(t)}\, \d E_\A(t)\, f_4 +
\int_{\sigma(\A)} \frac{q_4(t)}{v(t)}\, \d E_\A(t)\, f_5,
\end{align*}
and
\begin{align*}
c\theta + \delta A^{\gamma-1}\tilde\psi &=
\int_{\sigma(\A)} \frac{r_1(t)}{v(t)}\,\d E_\A(t)\, (f_1 + f_2) +
\int_{\sigma(\A)} \frac{r_2(t)}{v(t)}\, \d E_\A(t)\, f_3 \\
&\quad + \int_{\sigma(\A)} \frac{r_3(t)}{v(t)}\, \d E_\A(t)\, f_4 +
\int_{\sigma(\A)} \frac{r_4(t)}{v(t)}\, \d E_\A(t)\, f_5,
\end{align*}
with $v(t)$ as above,
\begin{align*}
q_1(t) &= \rho_1 a[\delta^2 t^{2\gamma-\frac12} + b t^\frac12(\rho_3 + ct)], \\
q_2(t) &= \rho_1 a\delta^2 t^{2\gamma-1}- b\rho_2 (\rho_1 + at)(\rho_3 + ct), \\
q_3(t) &= -\rho_2(\rho_1 + at)[\delta^2 t^{2\gamma-1} + b(\rho_3 + ct)], \\
q_4(t) &= \rho_3\delta t^{\gamma - 1}[a\rho_1 + \rho_2 (\rho_1 + at)],
\end{align*}
and
\begin{align*}
r_1(t) &= -\rho_1\rho_3 a\delta t^{\gamma-\frac12}, \\
r_2(t) &= - \rho_3\delta t^{\gamma-1}(\rho_1 a + \rho_1bt + abt^2 ), \\
r_3(t) &= \rho_2\rho_3\delta t^{\gamma-1}(\rho_1 + at), \\
r_4(t) &= c\rho_3[a\rho_1 + (\rho_1 + at)(\rho_2 + bt)] + \rho_3\delta^2 t^{2\gamma - 1}(\rho_1 + at).
\end{align*}
At this point, checking as before the growth orders
of the ratios ${q_\imath(t)}/{v(t)}$ and
${r_\imath(t)}/{v(t)}$ as $t\to\infty$,
the claim follows. The details are left to the reader.
\end{proof}

\begin{remark}
Actually, when $\gamma\leq1$, Theorem~\ref{THM-LPh} can be given a more direct proof. Indeed,
we already know from Proposition~\ref{PropInvertible} that $L^{-1}\in{\mathfrak L}(\H)$.
Hence, $L$ is a closed dissipative operator with
$0\in\rho(L)$, and the conclusion follows
from a slightly modified version of the Lumer-Phillips Theorem (see e.g.\ \cite{LZ}).
\end{remark}

\section{Stability}
\label{Ssei}

\noindent
We provide the result within the assumption  $\D(A) \Subset \H$.

\begin{theorem}
\label{STAB}
If $A^{-1}$ is compact, then the semigroup $S(t)$ is stable for every $\gamma \in \R$.
\end{theorem}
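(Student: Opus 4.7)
The plan is to apply Theorem~\ref{THM-STABLE}. The main obstacle is that $\D(L)$ is compactly embedded in $\H$ only for $\gamma<1$ (Proposition~\ref{PropCompact}), so the naive choice $\V=\D(L)$ fails when $\gamma\geq 1$. Instead, I exploit the compactness of $A^{-1}$ to split $\H$ into finite-dimensional $L$-invariant blocks and build $\V$ as a weighted direct sum that is compactly embedded in $\H$ by design.

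Since $A^{-1}$ is compact, there is an orthonormal basis $\{e_n\}$ of $H$ with $Ae_n=\alpha_n e_n$ and $\alpha_n\to\infty$, which I arrange so that $\alpha_n$ is nondecreasing. The $5$-dimensional real subspaces
$$
\H_n=\{(c_1 e_n,c_2 e_n,c_3 e_n,c_4 e_n,c_5 e_n):\, c_1,\dots,c_5\in\R\}\subset \H
$$
are mutually orthogonal in $\H$, contained in $\D(L)$, and invariant under $L$; accordingly $S(t)$ preserves each $\H_n$, its restriction $S_n(t)=\e^{tL_n}$ being a contraction on $\H_n$ by Proposition~\ref{PropDissipative}. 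Letting $P_n$ denote the orthogonal projection of $\H$ onto $\H_n$, I have $\H=\bigoplus_{n\in\N}\H_n$, and I define
$$
\V=\Big\{z\in\H :\, \|z\|_\V^2:=\sum_{n=1}^\infty (1+\alpha_n)\|P_n z\|_\H^2<\infty\Big\}.
$$
This $\V$ is a Hilbert space continuously and densely embedded in $\H$, and in fact compactly so by a standard diagonal extraction argument (each $\H_n$ is finite-dimensional and $\alpha_n\to\infty$). Since $S(t)$ commutes with each $P_n$, the contraction of each $S_n(t)$ yields $\|S(t)z\|_\V\leq \|z\|_\V$, so that for every $z\in\V$ the orbit $\{S(t)z:t\geq 0\}$ is bounded in $\V$ and hence relatively compact in $\H$. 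This gives hypothesis (i) of Theorem~\ref{THM-STABLE}.

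For (ii), let $z\in\V$ satisfy $\|S(t)z\|_\H=\|z\|_\H$ for all $t>0$. As $\|S_n(t)P_n z\|_\H\leq \|P_n z\|_\H$ term by term and these square-sum to $\|z\|_\H^2$, equality must hold block by block: $\|S_n(t)P_n z\|_\H=\|P_n z\|_\H$ for every $n$ and every $t>0$. Writing $(\varphi_n(t),\tilde\varphi_n(t),\psi_n(t),\tilde\psi_n(t),\theta_n(t))$ for the scalar coefficients of $S_n(t)P_n z$ in the basis $e_n$, identity~\eqref{Real-DISSIPATIVO} applied on the finite-dimensional invariant space $\H_n\subset \D(L)$ forces $\theta_n(t)\equiv 0$. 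Substituting this into~\eqref{SYS}, where $A$ acts as multiplication by $\alpha_n$ on $\H_n$, the third equation gives $\dot\psi_n\equiv 0$; the second equation then imposes $b\alpha_n\psi_n+a(\sqrt{\alpha_n}\,\varphi_n(t)+\psi_n)=0$, forcing $\varphi_n$ to be constant in $t$ (hence $\ddot\varphi_n\equiv 0$); and the first equation therefore requires $\sqrt{\alpha_n}\,\varphi_n+\psi_n=0$. Combining the last two algebraic relations yields $b\alpha_n\psi_n=0$, so $\psi_n=0$, and in turn all the remaining coefficients vanish. Therefore $P_n z=0$ for every $n$, i.e.\ $z=0$, and the stability of $S(t)$ follows from Theorem~\ref{THM-STABLE}.
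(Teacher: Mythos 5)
Your proof is correct, and while it rests on the same criterion (Theorem~\ref{THM-STABLE}) and the same dissipation computation for the uniqueness step, your construction of the auxiliary space $\V$ is genuinely different from the paper's. The paper takes $\V=H^{p+1}\times H^p\times H^{p+1}\times H^p\times H^p$ with $p=p(\gamma)$ chosen large enough that $\V\subset\D(L)$, and then invokes the (standard but not entirely free) fact that the restriction of $S(t)$ to this smoother scale is again a contraction in the $\V$-norm; compactness of $\V\Subset\H$ then gives hypothesis (i). You instead diagonalize $A$, observe that each five-dimensional block $\H_n$ lies in $\D(L)$ and is $L$-invariant, and define $\V$ as a weighted orthogonal sum; the contraction estimate $\|S(t)z\|_\V\leq\|z\|_\V$ then drops out block by block with no further argument, and you never need $\V\subset\D(L)$ (indeed your $\V$ is essentially the paper's space with $p=1$, which for large $\gamma$ is \emph{not} contained in $\D(L)$ -- Theorem~\ref{THM-STABLE} does not require it). What your route buys is a completely transparent verification of (i), uniform in $\gamma$, and a reduction of step (ii) to finite-dimensional ODE algebra on each eigenspace (where your chain $\theta_n\equiv0\Rightarrow\dot\psi_n\equiv0\Rightarrow\varphi_n$ constant $\Rightarrow\sqrt{\alpha_n}\varphi_n+\psi_n=0\Rightarrow\psi_n=0$ mirrors exactly the paper's global computation). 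What it costs is that the argument is wedded to the eigenbasis, hence strictly to the compact-resolvent case -- but that is precisely the hypothesis of the theorem, so nothing is lost here.
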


\begin{proof}
Aiming to apply Theorem~\ref{THM-STABLE}, we introduce
the Hilbert space $\V \Subset \H$ defined as
$$\V = H^{p+1} \times H^p \times H^{p+1} \times H^p \times H^p,$$
with $p=p(\gamma)>0$ large enough such that $\V\subset\D(L)$,
endowed with the norm
$$\| (\varphi, {\tilde \varphi}, \psi, \tilde \psi, \theta) \|_{\V}^2 =
a\| A^{\frac12}\varphi + \psi\|^2_p +
b\| \psi\|^2_{p+1} + \rho_1\| \tilde \varphi\|^2_p +
\rho_2\| \tilde \psi\|^2_p + \rho_3\| \theta\|^2_p.$$
Let then $z =(\varphi_0, \tilde\varphi_0, \psi_0, \tilde\psi_0, \theta_0) \in \V$ be arbitrarily fixed.
It is a standard matter to prove that the restriction of $S(t)$ to $\V$
is a contraction semigroup with respect to the norm of $\V$
as well.
Therefore,
$$\|S(t)z\|_\V\leq \|z\|_\V,\quad\forall t\geq 0,$$
showing that the entire orbit of $z$ is bounded in $\V$, hence relatively
compact in $\H$ thanks to the compactness of the embedding.
Assume next
$$\| S(t)z \|_\H = \| z \|_\H,\quad \forall t>0.$$
Exploiting \eqref{Real-DISSIPATIVO}, we get
$$0=\ddt \|S(t)z\|^2_\H = 2\l L S(t)z,S(t)z\r_\H = -2c \| \theta(t)\|_1^2,$$
and so $\theta(t)\equiv 0$. In turn, from the third equation of system~\eqref{SYS}
we infer that $\psi(t)\equiv\psi_0$.
Accordingly, \eqref{SYS} reduces to
$$
\begin{cases}
\rho_1{{\ddot \varphi}} + a A^{\frac12}(A^{\frac12}\varphi + \psi_0) =0,\\
b A \psi_0 + a (A^{\frac12}\varphi + \psi_0) = 0. \\
\end{cases}
$$
The second equation above yields
$$\varphi(t) \equiv -\frac{1}{a}A^{-\frac12}(bA+a)\psi_0,$$
and substituting into the first equation we obtain
$$
A^{\frac32}\psi_0 = 0\quad\Rightarrow\quad \psi_0=0
\quad\Rightarrow\quad \varphi(t)\equiv 0.
$$
Summarizing, we proved that $z=0$, and the claim follows.
\end{proof}

If the embedding $\D(A) \subset \H$ is not compact, the picture becomes less clear, and a comprehensive
result seems out of reach. What we can say in general (see the proof of Theorem~\ref{SEMSTAB1} of the next section)
is that
$$\sigma_{\rm p}(\L) \cap \i \R=\emptyset,\quad\forall\gamma\in\R,$$
no matter whether or not $\D(\L)\Subset\H_\CC$. This is not enough
(albeit necessary) to ensure stability, which would follow
if in addition one knew that
$\sigma_{\rm ap}(\L) \cap \i \R$ is countable (see~\cite{Batty}).
Nevertheless, for $\gamma\in[\tfrac12,1]$, the stability of $S(t)$ is obtained
as a byproduct of Theorem~\ref{SEMSTAB} below.

\section{Semiuniform Stability}
\label{Ssette}

\noindent
We begin by observing that, on account
of Theorem~\ref{THM-SUSTABLE}, $S(t)$ cannot be semiuniformly stable when $\gamma>1$, for its
infinitesimal generator $L$ (and so its complexification $\L$)
is not invertible by Proposition~\ref{PropInvertible}.
The situation is different for $\gamma\leq1$.

\begin{theorem}
\label{SEMSTAB}
If $\gamma \in [\tfrac12,1]$, the semigroup $S(t)$ is semiuniformly stable.
\end{theorem}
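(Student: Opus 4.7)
By Theorem~\ref{THM-SUSTABLE}, semiuniform stability of $S(t)$ is equivalent to the inclusion $\i\R\subset\rho(\L)$. Since $\gamma\leq 1$, Proposition~\ref{PropInvertible} already provides $0\in\rho(\L)$, so the task reduces to ruling out every $\i\lambda$ with $\lambda\in\R\setminus\{0\}$ from the spectrum. The plan is a contradiction argument: as $\L$ is dissipative, $\sigma(\L)\subset\{\Re z\leq 0\}$, and any hypothetical $\i\lambda\in\sigma(\L)$ lies on the boundary of the spectrum, hence in the approximate point spectrum of $\L$. This would yield a sequence $z_n=(\varphi_n,\tilde\varphi_n,\psi_n,\tilde\psi_n,\theta_n)\in\D(\L)$ with $\|z_n\|_{\H_\CC}=1$ and
$$f_n:=(\i\lambda-\L)z_n\to 0\quad\text{in }\H_\CC,$$
and the goal is to show that every component of $z_n$ vanishes in the limit, contradicting the normalization.

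The starting point is the dissipation identity \eqref{Cplx-DISSIPATIVO}: taking the real part of $\l f_n,z_n\r_{\H_\CC}$ gives $c\|\theta_n\|_1^2=\Re\l f_n,z_n\r_{\H_\CC}\to 0$, so $\theta_n\to 0$ in $H^1$. Next, reading the fifth component of $(\i\lambda-\L)z_n=f_n$ as
$$\delta A^\gamma\tilde\psi_n=\rho_3 f_{5,n}-\i\lambda\rho_3\theta_n-cA\theta_n,$$
whose right-hand side tends to zero in $H^{-1}$, one deduces $A^\gamma\tilde\psi_n\to 0$ in $H^{-1}$, i.e.\ $\tilde\psi_n\to 0$ in $H^{2\gamma-1}$. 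The third component $\i\lambda\psi_n=\tilde\psi_n+f_{3,n}$ (where $\lambda\neq 0$ is crucial) transfers this to $\psi_n\to 0$ in $H^{2\gamma-1}$; since $\gamma\geq\tfrac12$, the exponent $2\gamma-1$ is nonnegative, and these convergences already take place at least in $H$.

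To upgrade to the full $\H_\CC$-norm, I would test the fourth component equation
$$\i\lambda\rho_2\tilde\psi_n+bA\psi_n+a(A^{1/2}\varphi_n+\psi_n)-\delta A^\gamma\theta_n=\rho_2 f_{4,n}$$
against $\psi_n$ in the $H^{-1}$--$H^{1}$ pairing: the leading term is $b\|\psi_n\|_1^2$, whereas the remaining contributions can be absorbed as infinitesimal, using the a priori bound $\|z_n\|_{\H_\CC}\leq 1$, the already established convergences, and the Cauchy--Schwarz estimate $|\l A^\gamma\theta_n,\psi_n\r|\leq\|\theta_n\|_\gamma\|\psi_n\|_\gamma\to 0$, where $\|\theta_n\|_\gamma$ vanishes thanks to $\gamma\leq 1$ (via \eqref{POIN}) and $\|\psi_n\|_\gamma$ stays bounded. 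An analogous pairing of the second component equation with $\varphi_n$ yields $\|A^{1/2}\varphi_n+\psi_n\|\to 0$, and, via the first equation, $\tilde\varphi_n\to 0$ in $H$; the third equation then gives $\tilde\psi_n\to 0$ in $H$. Putting everything together, $\|z_n\|_{\H_\CC}\to 0$, the desired contradiction. The same estimate in fact shows $\i\lambda-\L$ to be bounded below, hence with closed range; surjectivity is then obtained by running the identical argument on the generator $\L^*$ of the adjoint contraction semigroup, whose structure is entirely analogous, so that $-\i\lambda\notin\sigma_{\rm p}(\L^*)$, i.e.\ the range of $\i\lambda-\L$ is dense.

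The main obstacle is precisely this upgrade step, and it is where the interval $\gamma\in[\tfrac12,1]$ is genuinely used. The inequality $\gamma\geq\tfrac12$ is what forces $H^{2\gamma-1}\hookrightarrow H$, giving the preliminary convergences of $\tilde\psi_n$ and $\psi_n$ effective strength; the inequality $\gamma\leq 1$ is what keeps the coupling $A^\gamma$ soft enough relative to the thermal dissipation, so that estimates like $\|A^\gamma\theta_n\|_{-1}\leq\alpha_0^{\gamma-1}\|\theta_n\|_1$ can close the loop via \eqref{POIN}. Outside this interval the chain of estimates breaks down, consistently with Proposition~\ref{PropInvertible}, which already rules out semiuniform stability for $\gamma>1$ through the non-invertibility of $\L$.
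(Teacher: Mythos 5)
Your overall strategy coincides with the paper's: reduce the claim, via Theorem~\ref{THM-SUSTABLE}, to showing that no point of $\i\R$ is an approximate eigenvalue of $\L$ (the paper cites a result of Arendt--Batty for bounded semigroups; you use the equivalent observation that a spectral point of a dissipative generator lying on $\i\R$ is a boundary point of the spectrum, hence an approximate eigenvalue), then kill $\theta_n$ through the dissipation identity \eqref{Cplx-DISSIPATIVO} and propagate the decay through the component equations, using $\gamma\geq\tfrac12$ to pass from $\A^{\gamma-\frac12}\tilde\psi_n\to0$ to $\tilde\psi_n\to0$ and $\gamma\leq1$ to control the $\A^\gamma\theta_n$ terms. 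Everything up to and including $\psi_n\to0$ in $H^1_\CC$ (your pairing of the fourth equation against $\psi_n$) is correct; that particular pairing is even a small shortcut compared with the paper, which recovers $\|\psi_n\|_1\to0$ only at the very end.

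There is, however, a genuine gap in the next step. Pairing the second component equation $\i\lambda\rho_1\tilde\varphi_n+a\A^{\frac12}(\A^{\frac12}\varphi_n+\psi_n)=\rho_1 f_{2,n}$ against $\varphi_n$ does \emph{not} yield $\|\A^{\frac12}\varphi_n+\psi_n\|\to0$: the term $\i\lambda\rho_1\l\tilde\varphi_n,\varphi_n\r$ is not infinitesimal at that stage. Substituting $\tilde\varphi_n=\i\lambda\varphi_n-f_{1,n}$, it equals $-\lambda^2\rho_1\|\varphi_n\|^2+{\rm o}(1)$, so the pairing only produces the asymptotic identity $a\|\A^{\frac12}\varphi_n+\psi_n\|^2=\lambda^2\rho_1\|\varphi_n\|^2+{\rm o}(1)$ between two quantities neither of which is yet known to vanish. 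For the same reason, ``via the first equation, $\tilde\varphi_n\to0$ in $H$'' is circular: it presupposes $\varphi_n\to0$ in $H$. The missing ingredient is precisely the paper's intermediate step: apply $\A^{-\frac12}$ to the fourth equation; since $\tilde\psi_n\to0$ in $H_\CC$, $\psi_n\to0$ in $H^1_\CC$ and $\A^{\gamma-\frac12}\theta_n\to0$ (this is where $\gamma\leq1$ enters), what survives is $a\varphi_n\to0$ in $H_\CC$ (the paper derives the weaker $\A^{-\frac12}\varphi_n\to0$, which already suffices). With that in hand, the first equation gives $\A^{-\frac12}\tilde\varphi_n\to0$, applying $\A^{-\frac12}$ to the second equation gives $\A^{\frac12}\varphi_n+\psi_n\to0$, and your own pairing then also closes, since $\lambda^2\rho_1\|\varphi_n\|^2\to0$. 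The concluding paragraph on closed range and the adjoint is redundant once the boundary-of-the-spectrum reduction has been invoked, but it is not incorrect.
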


\begin{proof}
In light of Theorem \ref{THM-SUSTABLE}, it is sufficient to show that
$\i\R\subset \rho(\L)$. To this end, we appeal to~\cite[Proposition 2.2]{Batty},
which says that if the complexified semigroup $S_\CC(t)$ is bounded (as it is the case), then
$$
\sigma(\L) \cap \i\R = \sigma_{\rm ap}(\L) \cap \i \R.
$$
In other words, it is enough to show that no approximate eigenvalues of the operator $\L$ lie on the imaginary axis.
Indeed, suppose by contradiction that $\i\lambda\in \sigma_{\rm ap}(\L)$, for some $\lambda\in\R$.
Note that $\lambda\neq 0$, for $\L$ is invertible. In this case, there exists a sequence
$z_n = (\varphi_n, {\tilde \varphi}_n, \psi_n, {\tilde \psi}_n, \theta_n) \in \D(\L)$ with
$$
\|z_n\|^2_{\H_\CC} = a{\| \A^{\frac12}\varphi_n + \psi_n\|}^2 +
b{\| \psi_n\|}^2_1 + \rho_1{\| \tilde \varphi_n\|}^2 +
\rho_2{\| \tilde \psi_n\|}^2 + \rho_3{\| \theta_n\|}^2 = 1,
$$
satisfying the relation
$$
\i\lambda z_n - \L z_n \to 0 \quad \text{in } \H_\CC.
$$
Componentwise, we draw the relations
\begin{align}
\label{quattro}
&\i\lambda \varphi_n- \tilde\varphi_n\to 0\quad\text{in } H^1_\CC,\\
\noalign{\vskip0.4mm}
\label{cinque}
&\i\lambda \rho_1\tilde\varphi_n + a \A^{\frac12}(\A^{\frac12}\varphi_{n}+\psi_n)\to 0\quad\text{in } H_\CC,\\
\noalign{\vskip0.7mm}
\label{UNO}
&\i\lambda \psi_n- \tilde\psi_n\to 0\quad\text{in } H^1_\CC,\\
\noalign{\vskip0.4mm}
\label{DUE}
&\i\lambda\rho_2 \tilde\psi_n + \A(b\psi_n-\delta \A^{\gamma-1} \theta_n) +a(\A^{\frac12}\varphi_{n}+\psi_n)
\to 0\quad\text{in } H_\CC,\\
\noalign{\vskip0.7mm}
\label{TRE}
&\i\lambda\rho_3 {\theta}_n + \A (c\theta_n +\delta \A^{\gamma-1} \tilde\psi_n)\to 0\quad\text{in } H_\CC.
\end{align}
By means of \eqref{Cplx-DISSIPATIVO},
$$
\Re\l \i\lambda z_n-\L z_n, z_n\r_{\H_\CC}
=-\Re\l \L z_n, z_n\r_{\H_\CC} = c\|\theta_n\|^2_1,
$$
and since the left-hand side tends to zero as $n\to\infty$, we infer that
\begin{equation}
\label{DISSD}
\theta_n \to 0 \quad\text{in } H^1_\CC.
\end{equation}
Therefore, an application of the operator $\A^{-\frac12}$ to \eqref{TRE} gives
$$\A^{\gamma-\frac12}\,\tilde\psi_n\to 0 \quad\text{in } H_\CC,$$
and since $\gamma\geq\frac12$ this readily implies
\begin{equation}
\label{PSID}
\tilde\psi_n\to 0 \quad\text{in } H_\CC.
\end{equation}
In turn, from \eqref{UNO},
\begin{equation}
\label{mediaPrima}
\psi_n\to 0 \quad\text{in } H_\CC,
\end{equation}
and we deduce from \eqref{DUE} that
$$
b\A^{\frac12}\psi_n+a \varphi_{n} -\delta \A^{\gamma-\frac12}\, \theta_n\to 0 \quad\text{in } H_\CC.
$$
Exploiting \eqref{DISSD} and the assumption $\gamma\leq1$, the relation above reduces to
\begin{equation}
\label{media}
b\A^{\frac12}\psi_n+a\varphi_{n} \to 0 \quad\text{in } H_\CC
\end{equation}
which, by means of \eqref{mediaPrima}, entails
$$
\A^{-\frac12}\, \varphi_n \to 0 \quad\text{in } H_\CC.
$$
At this point, we make use of \eqref{quattro} to get
$$
\A^{-\frac12}\, \tilde\varphi_n \to 0 \quad\text{in } H_\CC.
$$
Hence, by applying $\A^{-\frac12}$ to \eqref{cinque} we find
\begin{equation}
\label{varphiD}
\A^{\frac12}\, \varphi_n + \psi_n \to 0 \quad\text{in } H_\CC,
\end{equation}
and by virtue of \eqref{mediaPrima} we establish the convergence
$$
\varphi_n \to 0 \quad\text{in } H^1_\CC.
$$
As a consequence, \eqref{media} turns into
\begin{equation}
\label{psiD}
\psi_n \to 0 \quad\text{in } H^1_\CC.
\end{equation}
Finally, from \eqref{quattro} we conclude that
\begin{equation}
\label{fin}
\tilde \varphi_n \to 0 \quad\text{in } H_\CC.
\end{equation}
Collecting \eqref{DISSD}-\eqref{PSID} and \eqref{varphiD}-\eqref{fin}, the sought contradiction
is attained.
\end{proof}

The limitation $\gamma\geq\frac12$ plays an essential role in the proof.
Indeed, as seen in the previous section, if $\gamma<\tfrac12$ we cannot even ensure the stability of $S(t)$.
But again, if $\D(A)\Subset\H$ we do have a complete answer.

\begin{theorem}
\label{SEMSTAB1}
If $\A^{-1}$ is compact, then $S(t)$ is semiuniformly stable when $\gamma <\frac12$ as well.
\end{theorem}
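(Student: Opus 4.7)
The plan is to apply Theorem~\ref{THM-SUSTABLE}(ii): semiuniform stability is equivalent to $\i\R\subset\rho(\L)$. My first step is to harvest the extra compactness. Since $\gamma<\tfrac12<1$ and $\A^{-1}$ is compact, Proposition~\ref{PropCompact} (which remains valid for the complexification) gives the compact embedding $\D(\L)\Subset\H_\CC$, and combining this with the invertibility $0\in\rho(\L)$ from Proposition~\ref{PropInvertible} yields that $\L^{-1}\in{\mathfrak L}(\H_\CC)$ is a compact operator. Standard spectral theory then collapses $\sigma(\L)$ to a discrete set of isolated eigenvalues, i.e.\ $\sigma(\L)=\sigma_{\rm p}(\L)$. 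The task is thus reduced to ruling out purely imaginary eigenvalues, the case $\lambda=0$ being already disposed of by invertibility.

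The second step is a contradiction argument. Assume $\L z=\i\lambda z$ for some $\lambda\neq 0$ and nonzero $z=(\varphi,\tilde\varphi,\psi,\tilde\psi,\theta)\in\D(\L)$, and cascade through the five components. The dissipativity identity~\eqref{Cplx-DISSIPATIVO} forces $-c\|\theta\|_1^2=\Re\l\L z,z\r_{\H_\CC}=\Re\l\i\lambda z,z\r_{\H_\CC}=0$, hence $\theta=0$. Feeding this into the fifth row of $\L z=\i\lambda z$ produces the exact identity $\delta\A^\gamma\tilde\psi=0$ in $H_\CC$; injectivity of $\A^\gamma$, which holds for \emph{every} $\gamma\in\R$ since $\A$ is strictly positive, then forces $\tilde\psi=0$. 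The remaining cascade is routine: the third row gives $\i\lambda\psi=\tilde\psi=0$, whence $\psi=0$ as $\lambda\neq0$; the fourth row, with $\theta=\psi=\tilde\psi=0$, reduces to $a\A^{\frac12}\varphi=0$, hence $\varphi=0$; and the first row finally delivers $\tilde\varphi=\i\lambda\varphi=0$. Thus $z=0$, contradiction.

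The decisive step is the implication $\A^\gamma\tilde\psi=0\Rightarrow\tilde\psi=0$ in the low-exponent regime $\gamma<\tfrac12$. This is precisely the spot that broke down in the proof of Theorem~\ref{SEMSTAB}, where only the approximate relation $\A^{\gamma-\frac12}\tilde\psi_n\to 0$ was available and the Poincar\'e inequality~\eqref{POIN} required $\gamma\geq\tfrac12$ to close. In the present compact-resolvent setting that difficulty evaporates, because one works with a genuine eigenvector and the positive operator $\A^\gamma$ is injective regardless of sign. In short, the compactness of $\A^{-1}$ carries the entire burden for $\gamma<\tfrac12$, and no quantitative high-frequency spectral estimate on $\L$ is needed beyond the discreteness of the spectrum.
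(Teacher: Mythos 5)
Your proposal is correct and follows essentially the same route as the paper: compactness of $\D(\L)\Subset\H_\CC$ via Proposition~\ref{PropCompact} together with invertibility reduces $\sigma(\L)$ to the point spectrum (the paper invokes Kato's Theorem 6.29 here), and the eigenvector cascade $\theta=0\Rightarrow\tilde\psi=0\Rightarrow\psi=0\Rightarrow\varphi=0\Rightarrow\tilde\varphi=0$ is exactly the paper's argument. Your closing observation—that working with a genuine eigenvector lets the injectivity of $\A^\gamma$ replace the Poincar\'e-type estimate that forced $\gamma\geq\tfrac12$ in Theorem~\ref{SEMSTAB}—correctly identifies why the compactness hypothesis is what carries the case $\gamma<\tfrac12$.
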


\begin{proof}
As in the previous proof, we must show that $\i\R\subset\rho(\L)$.
The difference is that in this case we take advantage of the compact embedding
$\D(\L) \Subset \H_\CC$
ensured by Proposition~\ref{PropCompact}. This allows us to apply
a famous result of T.\ Kato~\cite[Theorem 6.29]{Kat},
stating that
$$
\sigma(\L) = \sigma_{\rm p}(\L),
$$
whenever $\L^{-1}$ is a compact operator.
Therefore, we only have to show that
$$
\sigma_{\rm p}(\L) \cap \i \R=\emptyset.
$$
By contradiction, suppose that $\i\lambda\in \sigma_{\rm p}(\L)$ for some $\lambda\in\R$.
As before, the invertibility of $\L$ forces
$\lambda\neq 0$. Then, there exists a nonnull vector
$z = (\varphi, {\tilde \varphi}, \psi, {\tilde \psi}, \theta)\in\D(\L)$ satisfying
$$\i \lambda z -\L z = 0.$$
In components,
\begin{align}
\label{quattroX}
&\i\lambda \varphi- \tilde\varphi= 0,\\
\noalign{\vskip0.4mm}
\label{cinqueX}
&\i\lambda \rho_1\tilde\varphi + a \A^{\frac12}(\A^{\frac12}\varphi+\psi)= 0,\\
\noalign{\vskip0.7mm}
\label{UNOX}
&\i\lambda \psi- \tilde\psi= 0,\\
\noalign{\vskip0.4mm}
\label{DUEX}
&\i\lambda\rho_2 \tilde\psi + \A (b \psi -\delta \A^{\gamma-1} \theta)+a(\A^{\frac12}\varphi+\psi) = 0,\\
\noalign{\vskip0.7mm}
\label{TREX}
&\i\lambda\rho_3 {\theta} + \A (c\theta +\delta \A^{\gamma-1} \tilde\psi)= 0.
\end{align}
By means of equality \eqref{Cplx-DISSIPATIVO}, we have the identity
$$
0= \Re \l \i\lambda z - \L z, z \r_{\H_\CC} = c \| \theta \|_1^2,
$$
and thus $\theta = 0$. Hence, equation \eqref{TREX} entails $\tilde\psi=0$
and then from \eqref{UNOX} we also obtain $\psi=0$. At this point, from \eqref{DUEX} we infer that
$\varphi=0$, and therefore exploiting \eqref{quattroX} (or \eqref{cinqueX}) we get $\tilde\varphi=0$. The proof is finished.
\end{proof}

\section{Exponential Stability}
\label{Sotto}

\noindent
We now turn our attention to the stronger (and certainly more interesting) notion
of exponential stability. We begin by stating the positive result.
We point out that no compactness assumption on $A^{-1}$ is made.

\begin{theorem}
\label{EXSTAB}
Assume that
$$\chi=0 \and \gamma =  \frac12.$$
Then the semigroup $S(t)$ is exponentially stable.
\end{theorem}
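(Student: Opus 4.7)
The plan is to verify the criterion of Theorem~\ref{THM-EXPSTABLE} by contradiction. Suppose there exist $\lambda_n \in \R$ and $z_n = (\varphi_n, \tilde\varphi_n, \psi_n, \tilde\psi_n, \theta_n) \in \D(\L)$ with $\|z_n\|_{\H_\CC} = 1$ satisfying
$$\i\lambda_n z_n - \L z_n \to 0 \quad\text{in } \H_\CC.$$
Since $0 \in \rho(\L)$ by Proposition~\ref{PropInvertible}, we may assume $\lambda_n \neq 0$. The resulting componentwise identities are the $\gamma=\tfrac12$ analogues of \eqref{quattro}-\eqref{TRE} from the proof of Theorem~\ref{SEMSTAB}, now with $\lambda$ replaced by $\lambda_n$. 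Taking the real part of $\l \i\lambda_n z_n - \L z_n, z_n\r_{\H_\CC}$ and invoking \eqref{Cplx-DISSIPATIVO}, the standard dissipation argument yields $\theta_n \to 0$ in $H^1_\CC$.

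With $\gamma=\tfrac12$, the analogue of \eqref{TRE} reduces to
$$\i\lambda_n \rho_3 \theta_n + c\A\theta_n + \delta \A^{\frac12}\tilde\psi_n \to 0 \quad\text{in } H_\CC.$$
The novelty compared to the proof of Theorem~\ref{SEMSTAB} is that $\lambda_n$ is no longer a fixed number, so the term $\lambda_n\theta_n$ cannot be dropped by itself. I would then pair this relation with a suitable auxiliary vector (for instance $\A^{-1}\tilde\psi_n$), and use the first-order identity $\tilde\psi_n = \i\lambda_n\psi_n + o(1)$ coming from \eqref{UNO} to re-express the dangerous $\lambda_n\theta_n$ contribution as a scalar product involving $\tilde\psi_n$ against quantities already known to vanish. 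Combined with $\theta_n \to 0$ in $H^1_\CC$, this produces $\tilde\psi_n \to 0$ in $H_\CC$, whence $\lambda_n\psi_n \to 0$ in $H_\CC$ and $\psi_n \to 0$ in $H^1_\CC$ via \eqref{UNO}. Substituting $\tilde\varphi_n = \i\lambda_n\varphi_n + o(1)$ into \eqref{cinque} and $\tilde\psi_n = \i\lambda_n\psi_n + o(1)$ into the (DUE) equation then yields two second-order (in $\lambda_n$) relations on the wave variables $\varphi_n$ and $\psi_n$ alone.

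At this point the assumption $\chi=0$, i.e.\ $a/\rho_1 = b/\rho_2$, is exploited decisively: a careful linear combination of these two second-order relations cancels the $\lambda_n^2$ terms, yielding a first-order identity which, together with the normalization and the convergences already established, forces $\varphi_n \to 0$ in $H^1_\CC$ and then $\tilde\varphi_n \to 0$ in $H_\CC$ via \eqref{quattro}. The collapse $\|z_n\|_{\H_\CC}\to 0$ contradicts the normalization, closing the argument. The main obstacle is precisely the control of the high-frequency regime $|\lambda_n|\to\infty$: standard Timoshenko arguments dispose of it by expanding in eigenvectors of $\A$, but here $\A$ may have purely continuous spectrum. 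One must therefore realize every manipulation through the Functional Calculus of $\A$, using only $\sigma(\A)=\sigma_{\rm ap}(\A)$, so that the crucial cancelations produced by $\chi=0$ and $\gamma=\tfrac12$ take place at the level of scalar symbols under spectral integrals rather than at the level of genuine eigenmodes.
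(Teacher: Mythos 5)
Your overall strategy --- arguing by contradiction through Theorem~\ref{THM-EXPSTABLE} --- is legitimate, and the paper itself remarks in \S\ref{Sotto} that such a resolvent-based proof is possible before opting for a direct energy method. However, your sketch has a genuine gap at precisely the step you flag as ``the novelty'': the treatment of the term $\i\lambda_n\rho_3\theta_n$ when $|\lambda_n|\to\infty$. You propose to pair the relation
$$\i\lambda_n \rho_3 \theta_n + c\A\theta_n + \delta \A^{\frac12}\tilde\psi_n \to 0 \quad\text{in } H_\CC$$
with a vector such as $\A^{-1}\tilde\psi_n$ and then substitute $\tilde\psi_n=\i\lambda_n\psi_n+{\rm o}(1)$ in order to tame the factor $\lambda_n$. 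This goes in the wrong direction: since $\lambda_n$ is real and the inner product is conjugate-linear in its second entry, the substitution turns $\i\lambda_n\rho_3\l\theta_n,\A^{-1}\tilde\psi_n\r$ into $\rho_3\lambda_n^{2}\l\theta_n,\A^{-1}\psi_n\r$ plus a remainder of order ${\rm o}(\lambda_n)$, and neither term is controlled by the only information available at that stage, namely $\theta_n\to0$ in $H^1_\CC$ and boundedness of $\psi_n$ in $H^1_\CC$. (Note also that $\l\A^{\frac12}\tilde\psi_n,\A^{-1}\tilde\psi_n\r=\|\tilde\psi_n\|_{-\frac12}^2$, so even the ``good'' term would only yield decay of $\tilde\psi_n$ in $H^{-\frac12}_\CC$, not in $H_\CC$.) To close this step one must bring in the second-order equation for $\psi_n$ to trade the powers of $\lambda_n$ for powers of $\A$, and this is exactly where the restriction $\gamma=\frac12$ enters quantitatively; it is not a bookkeeping matter that can be deferred.

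The same criticism applies to your second key step: you assert that a ``careful linear combination of the two second-order relations cancels the $\lambda_n^2$ terms'' thanks to $\chi=0$, but you do not exhibit the combination, and this cancellation is the heart of the whole theorem --- it is the frequency-domain counterpart of the identity in Lemma~\ref{LEMMA-K}, where $\chi=0$ is used. As written, your proposal correctly identifies where the difficulties lie (the unbounded sequence $\lambda_n$ and the possibly continuous spectrum of $\A$) but does not resolve them: the two assertions that would constitute the proof are left unproved, and the one concrete manipulation you indicate would fail. For comparison, the paper's proof avoids the resolvent formalism entirely: it establishes the differential inequalities of Lemmas~\ref{LEMMA-I}--\ref{LEMMA-K} for three auxiliary functionals (with $\Lambda_2=\frac{2\rho_2\rho_3}{\delta}\l A^{-\frac12}\theta,\dot\psi\r$ exploiting $\gamma=\frac12$ to convert thermal dissipation into control of $\|\dot\psi\|^2$, and $\Lambda_3$ exploiting $\chi=0$), combines them into a perturbed energy $\Lambda$ equivalent to $E$ satisfying $\ddt\Lambda+\eps^2E\leq0$, and concludes by Gronwall's lemma.
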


Theorem~\ref{EXSTAB} can be proved via linear semigroup techniques. For instance, a
possibility is to exploit
Theorem~\ref{THM-EXPSTABLE}. However, revisiting the arguments of~\cite{MRR}, a direct proof
can be given, based on the existence of suitable energy functionals. This is the approach
we will follow, which has also the advantage to be exportable to
deal with nonlinear versions of the problem (e.g.\ to prove the existence of bounded absorbing sets).

\begin{proof}[Proof of Theorem \ref{EXSTAB}]
By density, it is enough to show that
$$
E(t) \leq KE(0)\e^{-\kappa t},
$$
where
$$E(t)=\frac12\|S(t)z\|_\H^2$$
is the energy at time $t$ corresponding to the initial datum
$z=(\varphi, {\tilde \varphi}, \psi, \tilde \psi, \theta)\in\D(L)$.
Exploiting~\eqref{Real-DISSIPATIVO}, we deduce the energy equality
\begin{equation}
\label{EnEq}
\ddt E+c\|\theta\|^2_1=0.
\end{equation}
We now define three auxiliary energy functionals:
\begin{align*}
\Lambda_1(t) & = 2\rho_2 \l \dot\psi(t),\psi(t)\r - 2\rho_1\l \dot\varphi(t), \varphi(t)\r,\\
\noalign{\vskip1mm}
\Lambda_2(t) &= \frac{2\rho_2 \rho_3}{\delta}\l A^{-\frac12}\theta(t), \dot\psi(t)\r,\\
\noalign{\vskip1mm}
\Lambda_3(t) &= 2\rho_2\l \dot \psi(t), A^\frac12\varphi(t) + \psi(t)\r - 2\rho_2\l A^\frac12\psi(t), \dot\varphi(t) \r.
\end{align*}
Along this proof, $C>0$ will denote a {\it generic} constant depending only on the structural
parameters of the problem. We will also make use, without explicit mention, of the Poincar\'e
inequality~\eqref{POIN}, as well as of the H\"older and Young inequalities.
The following lemmas hold.

\begin{lemma}
\label{LEMMA-I}
There exists $C_1>0$ such that
$$
\ddt \Lambda_1 + \rho_1\| \dot\varphi\|^2 + b\| \psi \|^2_1 \leq
C_1 \big[ \| \dot\psi\|^2 + \| A^\frac12\varphi + \psi\|^2 + \|\theta\|^2_1 \big].
$$
\end{lemma}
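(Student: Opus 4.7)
The plan is to differentiate $\Lambda_1$ directly, substitute the first two evolution equations of \eqref{SYS} for $\ddot\varphi$ and $\ddot\psi$, and then rearrange so that the positive quantities $2\rho_1\|\dot\varphi\|^2$ and $2b\|\psi\|_1^2$ appear naturally on the left-hand side. The remaining cross-terms are then controlled via Young's and Poincar\'e \eqref{POIN} inequalities, with only a fraction of the $\|\psi\|_1^2$ mass being absorbed, leaving the required $b\|\psi\|_1^2$ on the left.

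Concretely, by the product rule,
$$\ddt \Lambda_1 = 2\rho_2 \|\dot\psi\|^2 + 2\rho_2 \l \ddot\psi, \psi\r - 2\rho_1 \|\dot\varphi\|^2 - 2\rho_1 \l \ddot\varphi, \varphi\r.$$
Substituting the expressions for $\rho_2\ddot\psi$ and $\rho_1\ddot\varphi$ from \eqref{SYS} and using the selfadjointness of $A^\frac12$ (legitimate for $z\in\D(L)$, whose regularity justifies the pairings) yields
$$2\rho_2 \l \ddot\psi, \psi\r = -2b\|\psi\|_1^2 - 2a\l A^{\frac12}\varphi + \psi, \psi\r + 2\delta\l A^\gamma \theta, \psi\r,$$
$$-2\rho_1 \l \ddot\varphi, \varphi\r = 2a\l A^{\frac12}\varphi + \psi, A^{\frac12}\varphi\r.$$
Writing $A^{\frac12}\varphi = (A^{\frac12}\varphi+\psi)-\psi$ inside the second expression consolidates the mechanical terms into the natural energy quantity $\|A^{\frac12}\varphi+\psi\|^2$, and collecting everything produces the identity
$$\ddt \Lambda_1 + 2\rho_1\|\dot\varphi\|^2 + 2b\|\psi\|_1^2
= 2\rho_2\|\dot\psi\|^2 + 2a\|A^{\frac12}\varphi+\psi\|^2 - 4a\l A^{\frac12}\varphi+\psi, \psi\r + 2\delta\l A^\gamma\theta, \psi\r.$$

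It remains to estimate the two inner products on the right. For the mechanical one, Young's inequality combined with Poincar\'e \eqref{POIN} gives
$$|4a\l A^{\frac12}\varphi+\psi, \psi\r| \leq \tfrac{b}{2}\|\psi\|_1^2 + C\|A^{\frac12}\varphi+\psi\|^2.$$
For the thermal coupling, here is where the critical value $\gamma=\tfrac12$ enters: the selfadjoint shift
$\l A^{\frac12}\theta, \psi\r = \l \theta, A^{\frac12}\psi\r$
yields
$$|2\delta\l A^{\frac12}\theta, \psi\r| \leq \tfrac{b}{2}\|\psi\|_1^2 + C\|\theta\|^2 \leq \tfrac{b}{2}\|\psi\|_1^2 + C\|\theta\|_1^2.$$
Absorbing the two $\tfrac{b}{2}\|\psi\|_1^2$ contributions into $2b\|\psi\|_1^2$ leaves $b\|\psi\|_1^2$ on the left, and discarding the harmless nonnegative term $\rho_1\|\dot\varphi\|^2$ (replacing $2\rho_1\|\dot\varphi\|^2$ by $\rho_1\|\dot\varphi\|^2$ only weakens the inequality) delivers the stated estimate with $C_1$ depending on $a,b,\rho_1,\rho_2,\delta$ and $\alpha_0$.

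The main technical point is the cancellation that produces $\|A^{\frac12}\varphi+\psi\|^2$ instead of the a priori uncontrolled $\|\varphi\|_1^2$, together with the fact that the exponent $\gamma=\tfrac12$ lets us trade a half derivative from $\theta$ onto $\psi$ and make the thermal coupling term absorbable. Note that the stability number $\chi$ plays no role at this stage: the specific choice of the multipliers $2\rho_2$ and $2\rho_1$ in $\Lambda_1$ is dictated solely by matching the wave coefficients, and the assumption $\chi=0$ is expected to enter only in the subsequent lemmas controlling $\Lambda_3$.
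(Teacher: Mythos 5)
Your proof is correct and follows essentially the same route as the paper: the same differential identity for $\Lambda_1$ (with the mechanical terms consolidated into $\|A^{\frac12}\varphi+\psi\|^2$) followed by Young and Poincar\'e estimates that absorb only a fraction of $\|\psi\|_1^2$. Your closing remarks about where $\gamma=\tfrac12$ is used and about $\chi$ being irrelevant to this lemma are also consistent with the paper's argument.
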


\begin{proof}
By direct computations, the functional $\Lambda_1$ fulfills the identity
$$
\ddt \Lambda_1 + 2\rho_1\| \dot\varphi\|^2 + 2b\| \psi \|^2_1=
2\rho_2\| \dot\psi\|^2 + 2a\| A^\frac12\varphi + \psi\|^2 + 2\delta \l \theta, A^\frac12\psi \r
- 4a \l A^\frac12 \varphi + \psi, \psi \r.
$$
Estimating
$$
2\delta \l \theta, A^\frac12\psi \r - 4a \l A^\frac12 \varphi + \psi, \psi \r \leq b\|\psi \|_1^2 +
C\big[\|A^\frac12 \varphi + \psi\|^2+\|\theta\|^2_1\big],
$$
we are done.
\end{proof}

\begin{lemma}
\label{LEMMA-J}
There exists $C_2>0$ such that, for every $\nu>0$ small,
$$
\ddt \Lambda_2 + \rho_2\| \dot\psi\|^2 \leq
\nu\big[\| \psi\|_1^2 + \|A^\frac12\varphi + \psi \|^2 \big] + \frac{C_2}{\nu}\| \theta \|^2_1.
$$
\end{lemma}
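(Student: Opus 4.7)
The plan is to differentiate $\Lambda_2$ in time, eliminate the time derivatives $\dot\theta$ and $\ddot\psi$ by inserting the last two equations of~\eqref{SYS} (with the prescribed value $\gamma=\tfrac12$), and then close via Young's inequality. The prefactor $\frac{2\rho_2\rho_3}{\delta}$ in $\Lambda_2$ is tailored precisely so that the $-\delta A^{\frac12}\dot\psi$ term from the heat equation reproduces the dissipative quantity $\|\dot\psi\|^2$ on the left-hand side.

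Concretely, I would first write
$$
\ddt\Lambda_2
=\frac{2\rho_2\rho_3}{\delta}\l A^{-\frac12}\dot\theta,\dot\psi\r
+\frac{2\rho_2\rho_3}{\delta}\l A^{-\frac12}\theta,\ddot\psi\r,
$$
and substitute $\rho_3\dot\theta=-cA\theta-\delta A^{\frac12}\dot\psi$ in the first summand and $\rho_2\ddot\psi=-bA\psi-a(A^{\frac12}\varphi+\psi)+\delta A^{\frac12}\theta$ in the second. Using the self-adjointness of the powers of $A$, the resulting identity reads
$$
\ddt\Lambda_2+2\rho_2\|\dot\psi\|^2
=-\tfrac{2\rho_2 c}{\delta}\l A^{\frac12}\theta,\dot\psi\r
-\tfrac{2b\rho_3}{\delta}\l\theta,A^{\frac12}\psi\r
-\tfrac{2a\rho_3}{\delta}\l A^{-\frac12}\theta,A^{\frac12}\varphi+\psi\r
+2\rho_3\|\theta\|^2.
$$

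The remaining task is to bound each residual term by Young's inequality. For the first cross term I would use $\bigl|\tfrac{2\rho_2 c}{\delta}\l A^{\frac12}\theta,\dot\psi\r\bigr|\leq \rho_2\|\dot\psi\|^2+C\|\theta\|_1^2$, so that absorbing $\rho_2\|\dot\psi\|^2$ on the left leaves exactly the advertised $\rho_2\|\dot\psi\|^2$. The remaining two inner products are split with weight $\nu$ on the mechanical norms and $1/\nu$ on the thermal ones, namely $\bigl|\tfrac{2b\rho_3}{\delta}\l\theta,A^{\frac12}\psi\r\bigr|\leq \nu\|\psi\|_1^2+\tfrac{C}{\nu}\|\theta\|^2$ and $\bigl|\tfrac{2a\rho_3}{\delta}\l A^{-\frac12}\theta,A^{\frac12}\varphi+\psi\r\bigr|\leq \nu\|A^{\frac12}\varphi+\psi\|^2+\tfrac{C}{\nu}\|A^{-\frac12}\theta\|^2$. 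The Poincar\'e-type inequality~\eqref{POIN} converts both $\|\theta\|^2$ and $\|A^{-\frac12}\theta\|^2$ into multiples of $\|\theta\|_1^2$, and likewise handles the leftover $2\rho_3\|\theta\|^2$. Restricting to $\nu\leq 1$ to fold purely multiplicative constants into $\tfrac{C_2}{\nu}$, the claimed inequality follows.

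There is no genuine obstacle: the multiplier $\Lambda_2$ is engineered so that the $A^{\frac12}$ factors coming from the heat equation at $\gamma=\tfrac12$ match the $A^{-\frac12}$ inside $\Lambda_2$ and produce a clean $\|\dot\psi\|^2$. The only point requiring care is to retain the factor $2$ in front of $\rho_2\|\dot\psi\|^2$ after the substitution, since it is precisely this ``extra'' copy that pays for the absorption of the $\l A^{\frac12}\theta,\dot\psi\r$ cross term via Young's inequality.
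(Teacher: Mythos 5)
Your proof is correct and follows essentially the same route as the paper: the same differential identity
$\ddt\Lambda_2+2\rho_2\|\dot\psi\|^2=2\rho_3\|\theta\|^2-\tfrac{2c\rho_2}{\delta}\l A^{\frac12}\theta,\dot\psi\r-\tfrac{2b\rho_3}{\delta}\l\theta,A^{\frac12}\psi\r-\tfrac{2a\rho_3}{\delta}\l A^{-\frac12}\theta,A^{\frac12}\varphi+\psi\r$,
followed by the same Young/Poincar\'e estimates, absorbing one copy of $\rho_2\|\dot\psi\|^2$ from the first cross term and weighting the other two by $\nu$ and $C/\nu$. Nothing is missing.
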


\begin{proof}
The functional $\Lambda_2$ satisfies the differential equality
$$
\ddt \Lambda_2 + 2\rho_2 \| \dot\psi\|^2 = 2\rho_3 \| \theta \|^2 -
\frac{2c\rho_2}{\delta} \l A^\frac12\theta, \dot\psi\r - \frac{2b\rho_3}{\delta} \l \theta, A^\frac12\psi\r
- \frac{2a\rho_3}{\delta} \l A^{-\frac12}\theta, A^\frac12\varphi + \psi\r.
$$
It is immediate to see that
$$
- \frac{2c\rho_2}{\delta} \l A^\frac12\theta, \dot\psi\r \leq \rho_2 \| \dot\psi\|^2 +
C\| \theta\|^2_1.
$$
Moreover, for every $\nu>0$,
$$
 - \frac{2b\rho_3}{\delta} \l \theta, A^\frac12\psi\r
 - \frac{2a\rho_3}{\delta} \l A^{-\frac12}\theta, A^\frac12\varphi + \psi\r
\leq \nu[ \|\psi \|^2_1 + \|A^\frac12\varphi + \psi \|^2 ] + \frac{C}{\nu} \| \theta \|^2_1,
$$
which proves the claim.
\end{proof}

\begin{lemma}
\label{LEMMA-K}
There exists $C_3>0$ such that
$$
\ddt \Lambda_3 + a\| A^\frac12\varphi + \psi\|^2\leq
C_3 \big[ \| \dot\psi\|^2 + \|\theta\|^2_1 \big].
$$
\end{lemma}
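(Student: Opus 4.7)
The plan is to differentiate $\Lambda_3$ in time and substitute the equations of system~\eqref{SYS}, hoping that the hypotheses $\chi=0$ and $\gamma=\tfrac12$ produce a key cancellation in the highest-order terms. Concretely, by the Leibniz rule
\begin{align*}
\ddt \Lambda_3 &= 2\rho_2\l \ddot\psi, A^{\frac12}\varphi + \psi\r + 2\rho_2\l \dot\psi, A^{\frac12}\dot\varphi + \dot\psi\r\\
&\quad - 2\rho_2\l A^{\frac12}\dot\psi, \dot\varphi\r - 2\rho_2\l A^{\frac12}\psi, \ddot\varphi\r.
\end{align*}
The two mixed first-order terms collapse by selfadjointness of $A^{\frac12}$, leaving $2\rho_2\|\dot\psi\|^2$.

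Next I would insert the second-order information. Since $\gamma=\tfrac12$, the second equation of~\eqref{SYS} reads $\rho_2\ddot\psi = -bA\psi - a(A^{\frac12}\varphi+\psi) + \delta A^{\frac12}\theta$, while the first yields $\rho_1\ddot\varphi = -aA^{\frac12}(A^{\frac12}\varphi+\psi)$. Substituting these and collecting terms, the expression $\ddt\Lambda_3 - 2\rho_2\|\dot\psi\|^2$ equals
$$
-2b\l A\psi, A^{\frac12}\varphi+\psi\r - 2a\|A^{\frac12}\varphi+\psi\|^2 + 2\delta\l A^{\frac12}\theta, A^{\frac12}\varphi+\psi\r + 2\tfrac{\rho_2}{\rho_1}a\,\l A\psi, A^{\frac12}\varphi+\psi\r,
$$
after using selfadjointness to rewrite $\l A^{\frac12}\psi, A^{\frac12}(A^{\frac12}\varphi+\psi)\r = \l A\psi, A^{\frac12}\varphi+\psi\r$. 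The decisive point is that the coefficient of the uncontrollable term $\l A\psi, A^{\frac12}\varphi+\psi\r$ equals $2(\tfrac{\rho_2}{\rho_1}a - b)$, which vanishes precisely when $\chi = \tfrac{a}{\rho_1}-\tfrac{b}{\rho_2}=0$. This exact cancellation is both the heart of the estimate and the step where the two structural hypotheses of Theorem~\ref{EXSTAB} are consumed.

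With that cancellation in hand, I am left with $\ddt\Lambda_3 + 2a\|A^{\frac12}\varphi+\psi\|^2 = 2\rho_2\|\dot\psi\|^2 + 2\delta\l A^{\frac12}\theta, A^{\frac12}\varphi+\psi\r$. The last term is treated by Cauchy–Schwarz followed by Young's inequality, pairing $A^{\frac12}\theta$ (which is bounded by $\|\theta\|_1$) against $A^{\frac12}\varphi+\psi$, to produce $a\|A^{\frac12}\varphi+\psi\|^2 + C\|\theta\|_1^2$. Absorbing the first summand on the left-hand side yields the claim with a suitable $C_3>0$.

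The computation is routine once one guesses the right functional; the only genuine obstacle is the bookkeeping of the second-order terms and the recognition that $\chi=0$ is exactly what is needed to kill the $\l A\psi, A^{\frac12}\varphi+\psi\r$ contribution. All duality pairings appearing above are well-defined on $\D(L)$ by virtue of Proposition~\ref{PropXT-reg}, so the formal manipulations are justified.
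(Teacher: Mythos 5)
Your proposal is correct and follows essentially the same route as the paper: differentiate $\Lambda_3$, observe the cancellation of the mixed first-order terms, substitute the equations, and note that $\chi=0$ kills the coefficient $2\rho_2\chi$ of $\l A\psi, A^{\frac12}\varphi+\psi\r$, arriving at the identity $\ddt \Lambda_3 + 2a\| A^\frac12\varphi + \psi\|^2 = 2\rho_2\| \dot\psi\|^2 + 2\delta \l A^\frac12\theta, A^\frac12\varphi + \psi\r$, which is exactly the one the paper uses before applying Young's inequality. The final absorption step and the resulting constant match the paper's argument.
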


\begin{proof}
Taking advantage of the assumption $\chi=0$, we infer that
$$
\ddt \Lambda_3 + 2a\| A^\frac12\varphi + \psi\|^2 =
2\rho_2\| \dot\psi\|^2 + 2\delta \l A^\frac12\theta, A^\frac12\varphi + \psi\r.
$$
Estimating the second term of the right-hand side as
$$
2\delta\l A^\frac12\theta, A^\frac12\varphi + \psi\r \leq a\| A^\frac12\varphi + \psi\|^2 + C\| \theta \|^2_1 ,
$$
we are finished.
\end{proof}

We are now in a position to conclude the proof of the theorem. For $\eps>0$ small,
we define the energy functional
$$
\Lambda(t) =  E(t) + \eps M \bigg[\frac{ a}{2C_1}  \Lambda_1(t)+ \Lambda_3(t)\bigg] + \sqrt{\eps} \Lambda_2(t),
$$
having set
$$
M = 1+ \max \bigg\{\frac{4C_2}{ac},\frac{4C_1 C_2}{abc} \bigg\}.
$$
Collecting Lemmas \ref{LEMMA-I}, \ref{LEMMA-J} and \ref{LEMMA-K},
together with the energy equality \eqref{EnEq}, we obtain
\begin{align*}
\ddt \Lambda &+ \Big(\frac{M\eps a}{2} - \nu \sqrt{\eps}\Big)\|A^{\frac12}\varphi + \psi\|^2 +
\Big(\frac{M \eps  ba}{2C_1}  - \nu \sqrt{\eps}\Big)\|\psi\|_1^2
+ \frac{M\eps a\rho_1}{2C_1}\|\dot\varphi\|^2\\ &
+ \Big(\sqrt{\eps}\rho_2 - \frac{M\eps a}{2}- \eps M C_3\Big)\|\dot\psi\|^2
+ \Big(c-\frac{M\eps a}{2} - \eps MC_3 -\frac{\sqrt{\eps}C_2}{\nu}\Big)\|\theta\|_1^2  \leq 0.
\end{align*}
Therefore, choosing
$$\nu=\frac{2\sqrt{\eps} C_2}{c},
$$
and possibly fixing a smaller $\eps>0$, we end up with
$$
\ddt \Lambda + \eps^2 E \leq 0.
$$
It is also clear that,
for all $\eps>0$ small,
$$\frac12 E(t)\leq \Lambda(t) \leq 2E(t).$$
Hence, the proof follows by an application of the standard Gronwall lemma.
\end{proof}

\section{Lack of Exponential Stability}
\label{Snove}

\noindent
We finally show that the sufficient condition for the exponential stability of $S(t)$ established
in Theorem~\ref{EXSTAB}
is necessary as well. Again, the compactness of $A^{-1}$
is not assumed.

\begin{theorem}
\label{LACKEXSTAB}
If $\chi\neq 0$ or $\gamma\neq \frac12$, then $S(t)$ fails to be exponentially stable.
\end{theorem}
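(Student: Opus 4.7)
The plan is to verify that the criterion of Theorem~\ref{THM-EXPSTABLE} fails in all regimes where $\chi\ne0$ or $\gamma\ne\tfrac12$.

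The case $\gamma>1$ is immediate: by Proposition~\ref{PropInvertible}, $\L$ fails to be invertible, so Theorem~\ref{THM-SUSTABLE} rules out even semiuniform stability and \emph{a fortiori} exponential stability.

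In the remaining regime $\gamma\le1$ with $\gamma\ne\tfrac12$ or $\chi\ne0$, the goal is to build unit vectors $z_n\in\D(\L)$ and reals $\lambda_n$ with $(\i\lambda_n-\L)z_n\to0$ in $\H_\CC$. The key novelty—which allows us to bypass the assumption $\D(A)\Subset H$—is to replace genuine eigenfunctions of $\A$ by approximate ones coming from $\sigma(\A)=\sigma_{\rm ap}(\A)$: pick $\alpha_n\to\infty$ in $\sigma(\A)$ and, via the spectral theorem, unit vectors $w_n\in H_\CC$ whose spectral measures $\mu^\A_{w_n}$ concentrate on arbitrarily small neighborhoods of $\alpha_n$, so that for every continuous $f$ we have $f(\A)w_n=f(\alpha_n)w_n+o(1)$ in $H_\CC$. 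Adopting the ansatz $z_n=(A_nw_n,B_nw_n,C_nw_n,D_nw_n,E_nw_n)$ with scalars depending on $\alpha_n$, the identity $(\i\lambda_n-\L)z_n=0$ reduces, modulo an $o(\|z_n\|_{\H_\CC})$ localization error, to a $5\times5$ homogeneous system; eliminating $B_n,D_n,E_n,A_n$ through the first, third, fifth and second rows leaves the scalar compatibility condition $P(\i\lambda_n,\alpha_n)\,C_n=0$, where
$$
P(\mu,\alpha)=\rho_2\mu^2+b\alpha+a+\frac{\mu\delta^2\alpha^{2\gamma}}{\rho_3\mu+c\alpha}-\frac{a^2\alpha}{\rho_1\mu^2+a\alpha}
$$
is the reduced characteristic function of the $5\times5$ symbol $L_\alpha$ obtained from $\L$ by replacing $\A$ with the scalar $\alpha$. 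The problem therefore reduces to locating, for each large $\alpha$, a root $\mu_j(\alpha)$ of $P(\cdot,\alpha)$ with $\Re\mu_j(\alpha)\to 0$; once found, setting $\lambda_n:=\Im\mu_j(\alpha_n)$ and taking $z_n$ to be the corresponding $\CC^5$-eigenvector tensored with $w_n$ yields
$$
(\i\lambda_n-\L)z_n=-\Re\mu_j(\alpha_n)\,z_n+o(\|z_n\|_{\H_\CC})\to 0.
$$

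The main obstacle is the asymptotic perturbation analysis of the roots of $P(\cdot,\alpha)$ for large $\alpha$. For $\gamma\le\tfrac34$, the dominant terms of $P$ produce two acoustic branches whose leading frequencies are the uncoupled wave speeds $\sqrt{a\alpha/\rho_1}$ and $\sqrt{b\alpha/\rho_2}$; a careful expansion (equivalently, applying the dissipative identity $\Re\mu=-c\|\theta_j\|_1^2/\|z_j\|_{\H_\CC}^2$ to the symbol's eigenvector) shows that the sub-leading dissipation rate of at least one of these branches vanishes as $\alpha\to\infty$ \emph{precisely} when $\gamma\ne\tfrac12$ or $\chi\ne0$, whereas in the resonant regime $\gamma=\tfrac12,\chi=0$ the two branches coincide and carry uniform non-vanishing dissipation, consistent with Theorem~\ref{EXSTAB}. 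The subcase $\tfrac34<\gamma\le 1$ is the most delicate, since the thermal term in $P$ becomes comparable to the wave terms and the acoustic frequencies themselves must be renormalized by the thermal coupling before the perturbative analysis can be carried out.
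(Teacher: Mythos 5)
Your overall architecture is the right one and coincides with the paper's at the crucial point: both replace genuine eigenfunctions of $\A$ by spectrally localized unit vectors $w_n$ with $f(\A)w_n\approx f(\alpha_n)w_n$ (this is exactly the paper's Lemma~\ref{OPTH}), both reduce to a scalar symbol in $\alpha$, and your reduced characteristic function $P(\mu,\alpha)$ is computed correctly; your shortcut for $\gamma>1$ via Proposition~\ref{PropInvertible} and Theorem~\ref{THM-SUSTABLE} is also valid (and is noted in \S\ref{Ssette}). The problem is that everything quantitative is missing. The entire content of the theorem is the claim that, outside the regime $\chi=0$, $\gamma=\tfrac12$, some branch of roots of $P(\cdot,\alpha)$ satisfies $\Re\mu_j(\alpha)\to0$; you assert this (``a careful expansion\dots shows'') without performing the expansion, and you explicitly concede that for $\tfrac34<\gamma\le1$ the analysis has not been carried out. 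As written, the proof establishes nothing beyond the case $\gamma>1$.

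There is also a structural point worth noting: your route is strictly harder than necessary. Locating the roots of $P(\cdot,\alpha)$ (a quintic in $\mu$ after clearing denominators) asymptotically in $\alpha$, with the branch degeneracies you yourself flag near $\gamma=\tfrac34$ and near $\chi=0$, is a delicate multi-case perturbation problem. The paper never does this. Instead it fixes an \emph{explicit} frequency --- $\lambda_n=\sqrt{a\alpha_n/\rho_1}$ in the cases $\gamma>\tfrac12$ or $\chi\neq0$, and a corrected $\lambda_n=\sqrt{\beta_n}$ when $\gamma<\tfrac12$, $\chi=0$ --- feeds the forcing $\hat z_n=(0,c_1w_n,0,c_2w_n,0)$ into the resolvent equation, and solves the resulting $3\times3$ scalar system by direct elimination, finding that a coefficient blows up like $\alpha_n^{2\gamma-1}$, $|\chi|\sqrt{\alpha_n}$, or $\alpha_n^{1-2\gamma}$ respectively; by Theorem~\ref{THM-EXPSTABLE} this already contradicts exponential stability. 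Resolvent blow-up at a single well-chosen $\lambda_n$ is a weaker (hence easier) statement than spectrum of the symbol approaching $\i\R$: a non-normal $5\times5$ matrix can have a huge resolvent far from its eigenvalues, so your reduction to root asymptotics demands more than the theorem requires. To complete your argument you would either have to carry out the full root expansion in every subcase (including the one you leave open), or switch to the paper's strategy of testing the resolvent directly.
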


The remaining part of the section is devoted to the proof of Theorem~\ref{LACKEXSTAB}.
First, we need a technical operator-theoretical lemma.

\begin{lemma}
\label{OPTH}
Let $\alpha\in\sigma(\A)$ be fixed,
and let ${\mathcal Q}\subset\R$ be a given bounded set. Then, for every $\eps>0$ small enough,
there exists a unit vector $w_\eps\in H_\CC$
such that the vector
$$\xi_{{q},\eps}=\A^{q} w_\eps-\alpha^{q} w_\eps$$
satisfies the relation
$$\|\xi_{{q},\eps}\|\leq\eps,\quad\forall{q}\in{\mathcal Q}.$$
\end{lemma}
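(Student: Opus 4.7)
The plan is to exploit the spectral theorem for $\A$ together with the fact that, since $\A$ is selfadjoint, $\sigma(\A) = \sigma_{\rm ap}(\A)$ and the spectral projector $E_\A(\Delta)$ is nonzero for every open interval $\Delta$ meeting $\sigma(\A)$. The key observation is that the estimate is uniform in $q \in \mathcal{Q}$ essentially because $(q,t)\mapsto t^q$ is uniformly continuous on compact subsets of the open half-line $(0,\infty)$, and $\alpha \geq \alpha_0 > 0$.

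More precisely, I would first fix $\alpha \in \sigma(\A)$ (so $\alpha \geq \alpha_0 > 0$) and note that the function
$$
\Phi(q,t) = |t^q - \alpha^q|
$$
is continuous on the compact set $\overline{\mathcal{Q}} \times [\alpha/2, 2\alpha]$ (using that $\mathcal{Q}$ is bounded), hence uniformly continuous. In particular, there exists $\delta = \delta(\eps) \in (0,\alpha/2)$ such that
$$
|t - \alpha| \leq \delta \quad\Longrightarrow\quad |t^q - \alpha^q| \leq \eps, \quad \forall q \in \mathcal{Q}.
$$

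Next, consider the open interval $\Delta = (\alpha - \delta, \alpha + \delta)$. Since $\alpha \in \sigma(\A)$, the spectral projector $E_\A(\Delta)$ is nonzero, so I may pick a vector $v \in H_\CC$ with $E_\A(\Delta)v \neq 0$ and define the unit vector
$$
w_\eps = \frac{E_\A(\Delta) v}{\|E_\A(\Delta) v\|}.
$$
By construction, its spectral measure $\mu^\A_{w_\eps}$ is supported in $\overline{\Delta}$, with total mass $\mu^\A_{w_\eps}(\overline{\Delta}) = \|w_\eps\|^2 = 1$. Applying the spectral calculus formula stated in $\S\ref{Sdue}$ to the function $f_q(t) = t^q - \alpha^q$, I get
$$
\|\xi_{q,\eps}\|^2 = \|\A^q w_\eps - \alpha^q w_\eps\|^2 = \int_{\sigma(\A)} |t^q - \alpha^q|^2 \, \d\mu^\A_{w_\eps}(t) = \int_{\overline{\Delta}\cap\sigma(\A)} |t^q - \alpha^q|^2 \, \d\mu^\A_{w_\eps}(t) \leq \eps^2,
$$
uniformly in $q \in \mathcal{Q}$, which is the claim.

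The argument is essentially routine, and I do not foresee a real obstacle: the only point requiring a little care is the uniform continuity of $(q,t) \mapsto t^q$, which must be done on a compact set bounded away from zero (hence the restriction to a neighborhood of $\alpha$ inside $(0,\infty)$, automatic because $\alpha \geq \alpha_0 > 0$). Once $\delta$ is chosen, the rest is a direct application of the functional calculus, with the defining property of the spectrum, namely the non-triviality of $E_\A(\Delta)$ on any open set hitting $\sigma(\A)$, providing the required unit vector $w_\eps$.
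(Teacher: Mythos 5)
Your proof is correct and follows essentially the same route as the paper: restrict to a small spectral window around $\alpha$, take a unit vector in the range of the corresponding (nonzero) spectral projection, and use the functional calculus to bound $\|\xi_{q,\eps}\|$ by $\sup_{t}|t^q-\alpha^q|$ over that window. The only difference is cosmetic: you obtain the uniform-in-$q$ smallness via uniform continuity of $(q,t)\mapsto t^q$ on a compact set bounded away from zero, whereas the paper uses an explicit monotonicity/derivative bound with constant $K=\sup_{q\in\mathcal Q}2|q|\alpha^{q-1}$ and then rescales $\eps$.
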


\begin{proof}
For $\eps>0$ small enough, let us consider the interval
$$I_\eps=(\alpha-\eps,\alpha+\eps)\subset\R^+.$$
Since $E_\A(I_\eps)$ is a nonnull projection
(for $\alpha$ belongs to the spectrum), we can select a vector
$$w_\eps\in E_\A(I_\eps)H_\CC
\qquad\text{with}\qquad\|w_\eps\|=1.$$
By the functional calculus of $\A$,
$$\|\xi_{{q},\eps}\|^2=\int_{\sigma(\A)}|t^{q}-\alpha^{q}|^2\,\d\mu^\A_{w_\eps}(t),$$
where, for every Borel set $\Sigma\subset\CC$,
$$\mu^\A_{w_\eps}(\Sigma)=\|E_\A(\Sigma)w_\eps\|^2=\|E_\A(\Sigma)E_\A(I_\eps)w_\eps\|^2
=\|E_\A(\Sigma\cap I_\eps)w_\eps\|^2.
$$
Hence $\mu^\A_{w_\eps}$ is supported on $I_\eps$, and
$$\mu^\A_{w_\eps}(I_\eps)=\|E_\A(I_\eps)w_\eps\|^2=\|w_\eps\|^2=1.
$$
We conclude that
$$
\|\xi_{{q},\eps}\|\leq \sup_{t\in I_\eps}\,|t^{q}-\alpha^{q}|
=\begin{cases}
(\alpha+\eps)^{q}-\alpha^{q} &\text{if } q\geq 1,\\
\alpha^{q}-(\alpha-\eps)^{q} &\text{if } q\in[0,1),\\
(\alpha-\eps)^{q}-\alpha^{q}&\text{if } q<0.
\end{cases}
$$
Thus, for $\eps$ small enough (depending only on $\alpha$ and ${\mathcal Q}$),
$$
\|\xi_{{q},\eps}\|\leq
K\eps,
$$
having set
$$K=\sup_{{q}\in{\mathcal Q}}\,2|{q}|\alpha^{{q}-1}.$$
Up to redefining $\eps$ properly, the proof is finished.
\end{proof}

Select $\alpha_n\in\sigma(\A)$ with $\alpha_n\to\infty$ (this is possible
since $\A$ is unbounded).
By Lemma~\ref{OPTH}, given a positive sequence $\nu_n\to 0$,
there exist $w_n\in H_\CC$ such that the vectors
$$\xi_{{q},n}=\A^{q} w_n-\alpha_n^{q} w_n$$
fulfill the inequality
\begin{equation}
\label{xiconv}
\|\xi_{{q},n}\|\leq\nu_n,\qquad\text{for }q=\gamma,\tfrac12,1.
\end{equation}
Next, we set
$$\hat z_n = (0, c_1 w_n, 0, c_2 w_n,0) \in \H_\CC,$$
where the constants $c_1,c_2$ will be properly chosen in a later moment in such a way that
$$
\|\hat z_n\|_{\H_{\CC}} = 1.
$$
Assume now by contradiction that the semigroup $S(t)$ is exponentially stable.
Then,
for any given sequence $\lambda_n \in \R$ the resolvent equation
$$\i\lambda_n z_n - \L z_n = \hat z_n$$
has a unique solution
$$
z_n = (\varphi_n, {\tilde \varphi}_n, \psi_n, {\tilde \psi}_n, \theta_n) \in \D(\L).
$$
Besides, by Theorem \ref{THM-EXPSTABLE} there is $\eps>0$ such that
\begin{equation}
\label{BBB}
\|z_n\|_{\H_\CC}\leq \frac{1}{\eps}\|\hat z_n\|_{\H_\CC} = \frac{1}{\eps}.
\end{equation}
Namely,
the sequence $z_n$ is bounded.
We will reach a contradiction by showing it is not so.
To this end, we begin to reformulate the resolvent equation above
componentwise. This leads to the system
\begin{align*}
&\i\lambda_n \varphi_n - \tilde\varphi_n = 0, \\
&\i\lambda_n\rho_1 \tilde\varphi_n + a\A^\frac12 (\A^\frac12\varphi_n + \psi_n)= \rho_1c_1w_n,\\
&\i\lambda_n \psi_n - \tilde\psi_n = 0, \\
&\i\lambda_n\rho_2 \tilde\psi_n - \A(\delta \A^ {\gamma-1}\theta_n-b \psi_n)
+ a(\A^\frac12\varphi_n + \psi_n) = \rho_2c_2w_n,\\
&\i\lambda_n \rho_3\theta_n + \A(c \theta_n + \delta \A^{\gamma-1}\tilde\psi_n)=0,
\end{align*}
which, after straightforward calculations, reduces to
\begin{align}
\label{LA1}
-&\rho_1\lambda_n^2 \tilde\varphi_n + a \A^\frac12 (\A^\frac12\tilde\varphi_n + \tilde\psi_n)= \i\lambda_n\rho_1 c_1w_n,\\
\label{LA2}
-&\rho_2 \lambda_n^2 \tilde\psi_n -\A(\i\lambda_n \delta \A^{\gamma-1}\theta_n- b \tilde \psi_n)
+ a(\A^\frac12\tilde\varphi_n + \tilde\psi_n)
 = \i\lambda_n\rho_2 c_2w_n,\\
\label{LA3}
&\i\lambda_n \rho_3 \theta_n +\A(c \theta_n + \delta \A^{\gamma-1}\tilde\psi_n)= 0.
\end{align}
For every $n$, the solution $(\tilde\varphi_n,\tilde\psi_n,\theta_n)$ to \eqref{LA1}-\eqref{LA3}
can be written in the form
\begin{align*}
\tilde \varphi_n = B_n w_n + q^1_n,\\
\noalign{\vskip0.2mm}
\tilde \psi_n = C_n w_n + q^2_n,\\
\theta_n = D_n w_n + q^3_n,
\end{align*}
for some $B_n,C_n,D_n\in\CC$ and some vectors $q_n^\imath$ such that
$$q_n^\imath \perp w_n,\qquad\text{for } \imath=1,2,3.$$
It is then apparent from \eqref{BBB} that
\begin{equation}
\label{boundQ}
\|q_n^\imath\| \leq C,
\end{equation}
where, here and till the end of the proof,
$C\geq0$ stands for a {\it generic} constant depending only on the structural parameters of the problem
(in particular, independent of $n$).
By the same token,
\begin{equation}
\label{bound}
|B_n|\leq C, \qquad |C_n|\leq C,
\qquad |D_n|\leq C.
\end{equation}
Taking the inner product in $H_\CC$ of \eqref{LA1}-\eqref{LA3} and $w_n$, we obtain the system
\begin{align}
\label{LA1x}
-&\rho_1\lambda_n^2 B_n + a [\alpha_n B_n +  \sqrt{\alpha_n}C_n]= f_n + \i\lambda_n \rho_1 c_1,\\
\noalign{\vskip1mm}
\label{LA2x}
-&\rho_2 \lambda_n^2 C_n + b \alpha_n C_n + a[\sqrt{\alpha_n}B_n + C_n]
 -\i\lambda_n \delta \alpha_n^\gamma D_n = g_n + \i\lambda_n\rho_2 c_2 ,\\
 \noalign{\vskip1mm}
\label{LA3x}
&\i\lambda_n \rho_3 D_n + c \alpha_n D_n + \delta \alpha_n^\gamma C_n= h_n,
\end{align}
having set
\begin{align*}
f_n &= -a [B_n \l \xi_{1,n},w_n\r + \l q_n^1 , \xi_{1,n}\r + C_n \l \xi_{\frac12,n}, w_n\r + \l q_n^2,\xi_{\frac12,n}\r],\\
 \noalign{\vskip1mm}
g_n &= -b[ C_n\l \xi_{1,n},w_n\r + \l q_n^2 , \xi_{1,n}\r] -a[B_n\l \xi_{\frac12,n},w_n\r + \l q_n^1,\xi_{\frac12,n}\r]\\
&\quad + \i \lambda_n \delta [D_n \l \xi_{\gamma,n},w_n\r + \l q_n^3 , \xi_{\gamma,n}\r],\\
 \noalign{\vskip1mm}
h_n &= -c [D_n\l \xi_{1,n},w_n\r+\l q_n^3,\xi_{1,n}\r ] - \delta [C_n\l \xi_{\gamma,n},w_n\r + \l q_n^2 , \xi_{\gamma,n}\r].
\end{align*}
By means of \eqref{xiconv} and \eqref{boundQ}-\eqref{bound}, it is readily seen that
\begin{equation}
\label{forz}
|f_n|\leq C \nu_n, \qquad |g_n|\leq C (1+ |\lambda_n|) \nu_n,\qquad |h_n|\leq C \nu_n.
\end{equation}
At this point, we shall distinguish three cases:

\begin{itemize}
\item[(i)] $\gamma>\frac12$.\smallskip
\item[(ii)] $\gamma\leq\frac12$ and $\chi\not=0$.\smallskip
\item[(iii)] $\gamma<\frac12$ and $\chi=0$.\smallskip
\end{itemize}

\medskip
\noindent
$\bullet$ {\bf Cases (i) and (ii).}
Choosing
$$
c_1 = \frac{1}{\sqrt{\rho_1}}, \qquad c_2 = 0, \qquad  \lambda_n = \sqrt{\frac{a\alpha_n}{\rho_1}},
$$
equation \eqref{LA1x} simply becomes
\begin{equation}
\label{pk}
C_n = \frac{f_n}{a\sqrt{\alpha_n}} + \frac{\i}{\sqrt{a}}.
\end{equation}
Substituting \eqref{pk} into \eqref{LA3x}, we find
\begin{equation}
\label{pkpk}
D_n = -\sqrt{\frac{\rho_1}{a}} \frac{\i \alpha_n^{\gamma-\frac12}\delta}{[c \sqrt{\rho_1\alpha_n}+\i \sqrt{a}\rho_3]} + p_n,
\end{equation}
where
$$
p_n = \frac{\sqrt{\rho_1}}{\sqrt{\alpha_n}[c \sqrt{\rho_1\alpha_n}+\i \sqrt{a}\rho_3]}\bigg[h_n
- \frac{\alpha_n^{\gamma-\frac12}\delta f_n}{a}\bigg].
$$
Observe that, by \eqref{forz},
\begin{equation}
\label{forzpn}
|p_n| \leq C|h_n|\alpha_n^{-1} + C|f_n|\alpha_n^{\gamma-\frac32}
\leq C\nu_n\big(\alpha_n^{-1}+\alpha_n^{\gamma-\frac32}\big).
\end{equation}
Finally, plugging \eqref{pk}-\eqref{pkpk} into \eqref{LA2x}, and recalling the definition
of $\chi$, we infer that
\begin{align}
\label{finbi}
B_n &= \bigg[\frac{\chi\rho_2\sqrt{\alpha_n}}{a}-\frac{1}{\sqrt{\alpha_n}}\bigg] C_n
+ \frac{\i\delta\alpha_n^\gamma D_n}{\sqrt{a\rho_1}} + \frac{g_n}{a\sqrt{\alpha_n}}\\
\nonumber
&= \frac{\alpha_n^{2\gamma}\delta^2 c \sqrt{\rho_1}}{a[c^2 \rho_1\alpha_n+ a\rho_3^2]}
- \frac{\i\alpha_n^{2\gamma-\frac12}\delta^2 \sqrt{a}\rho_3}{a[c^2 \rho_1\alpha_n+ a\rho_3^2]}+
\frac{\i}{\sqrt{a}}\bigg[\frac{\chi\rho_2\sqrt{\alpha_n}}{a} - \frac{1}{\sqrt{\alpha_n}}\bigg] + r_n,
\end{align}
where
$$
r_n = \frac{f_n}{a}\bigg[\frac{\chi\rho_2}{a}- \frac{1}{\alpha_n}\bigg]
 + \frac{\i\delta \alpha_n^\gamma p_n}{\sqrt{a\rho_1}} + \frac{g_n}{a\sqrt{\alpha_n}}.
$$
In light of \eqref{forz} and \eqref{forzpn},
$$
|r_n|\leq \frac{C(1+\alpha_n)|f_n|}{\alpha_n} + C\alpha_n^\gamma|p_n| + \frac{C|g_n|}{\sqrt{\alpha_n}}
\leq C\nu_n\big(1+\alpha_n^{\gamma-1}+ \alpha_n^{2\gamma - \frac32}\big).
$$
If $\gamma>\tfrac12$, then
$$
|r_n| = {\rm o} \big(\alpha_n^{2\gamma-1}\big),
$$
and \eqref{finbi} yields
\begin{align*}
|\Re B_n| &= \bigg|\frac{\alpha_n^{2\gamma}\delta^2 c \sqrt{\rho_1}}{a[c^2 \rho_1\alpha_n+ a\rho_3^2]} +
\Re r_n\bigg|\\
\noalign{\vskip1mm}
&\geq \frac{\alpha_n^{2\gamma}\delta^2 c \sqrt{\rho_1}}{a[c^2 \rho_1\alpha_n+ a\rho_3^2]} -|\Re r_n|
\sim \frac{ \alpha_n^{2\gamma-1}\delta^2}{ac\sqrt{\rho_1}}  \to \infty.
\end{align*}
Conversely, if $\gamma\leq\frac12$,
$$
|r_n| = {\rm o} (\sqrt{\alpha_n}),
$$
and since $\chi\not=0$ we learn from \eqref{finbi} that
\begin{align*}
|\Im B_n| &= \bigg|- \frac{\alpha_n^{2\gamma-\frac12}\delta^2 \sqrt{a}\rho_3}{a[c^2 \rho_1\alpha_n+ a\rho_3^2]}+
\frac{1}{\sqrt{a}}\bigg[\frac{\chi\rho_2\sqrt{\alpha_n}}{a} - \frac{1}{\sqrt{\alpha_n}}\bigg] +
\Im r_n\bigg|\\
\noalign{\vskip1mm}
&\geq \bigg|\frac{1}{\sqrt{a}}\bigg[\frac{\chi\rho_2\sqrt{\alpha_n}}{a} - \frac{1}{\sqrt{\alpha_n}}\bigg]
- \frac{\alpha_n^{2\gamma-\frac12}\delta^2 \sqrt{a}\rho_3}{a[c^2 \rho_1\alpha_n+ a\rho_3^2]}\bigg|
-|\Im r_n| \sim \frac{\sqrt{\alpha_n}\chi\rho_2}{a\sqrt{a}}  \to \infty.
\end{align*}
In both cases, we reach the conclusion
$$\|z_n\|_{\H_\CC}\geq\sqrt{\rho_1}\,\|\tilde\varphi_n\|\geq \sqrt{\rho_1}|B_n|\to\infty,
$$
in contradiction with \eqref{BBB}.
\qed

\medskip
\noindent
{\bf Case (iii).}
We choose
$$
c_1 = 0, \qquad c_2 = \frac{1}{\sqrt{\rho_2}},\qquad \lambda_n = \sqrt{\beta_n},
$$
where
$$
\beta_n = \frac{2\rho_2 a\alpha_n + a\rho_1 + a\sqrt{\rho_1^2 + 4\rho_1\rho_2\alpha_n}}{2\rho_1\rho_2}>0.
$$
In particular,
\begin{equation}
\label{asymp}
\lambda_n \sim \sqrt{\frac{a\alpha_n}{\rho_1}}
\end{equation}
and
\begin{equation}
\label{asymp1}
\rho_1\lambda_n^2 -a\alpha_n = \rho_1\beta_n - a\alpha_n \sim a\sqrt{\frac{\rho_1\alpha_n}{\rho_2}}.
\end{equation}
Accordingly, from \eqref{LA1x} we get
$$
B_n = \frac{f_n}{a\alpha_n - \rho_1\beta_n} - \frac{a\sqrt{\alpha_n}C_n}{a\alpha_n - \rho_1\beta_n}.
$$
Moreover, exploiting \eqref{LA3x},
$$
D_n = \frac{h_n}{\i\sqrt{\beta_n}\rho_3 + c\alpha_n} - \frac{\delta \alpha_n^\gamma C_n}{\i\sqrt{\beta_n}\rho_3 + c\alpha_n}.
$$
Hence, substituting the two expressions above into \eqref{LA2x} and
exploiting the assumption $\chi=0$,
we infer that
\begin{equation}
\label{cc}
C_n = \frac{\alpha_n^{1-2\gamma}\sqrt{\rho_2}c}{\delta^2}
+ \frac{\i\sqrt{\beta_n \rho_2}\rho_3}{\delta^2\alpha_n^{2\gamma}} + t_n,
\end{equation}
having set
$$
t_n = \frac{g_n[\i\sqrt{\beta_n}\rho_3 + c\alpha_n]}{\i\sqrt{\beta_n}\delta^2\alpha_n^{2\gamma}}
+ \frac{a\sqrt{\alpha_n}f_n[\i\sqrt{\beta_n}\rho_3 + c\alpha_n]}
{\i\sqrt{\beta_n}\delta^2\alpha_n^{2\gamma}[\rho_1\beta_n-a\alpha_n ]}
+ \frac{h_n}{\delta \alpha_n^\gamma}.
$$
By means of \eqref{forz}, \eqref{asymp} and \eqref{asymp1},
$$
|t_n| \leq C\nu_n\big(\alpha_n^{1-2\gamma}+\alpha_n^{-\gamma}\big).
$$
As $\gamma<\frac12$, we learn that
$$
|t_n| = {\rm o}\big(\alpha_n^{1-2\gamma}\big).
$$
Therefore, from \eqref{cc}, we arrive at
\begin{align*}
|\Re C_n| = \bigg|\frac{\alpha_n^{1-2\gamma}\sqrt{\rho_2}c}{\delta^2} +
\Re t_n\bigg|
\geq \frac{\alpha_n^{1-2\gamma}\sqrt{\rho_2}c}{\delta^2}-| \Re t_n|
\sim \frac{\alpha_n^{1-2\gamma}\sqrt{\rho_2}c}{\delta^2}  \to \infty.
\end{align*}
As before, we end up with
$$\|z_n\|_{\H_\CC}\geq\sqrt{\rho_2}\,\|\tilde\psi_n\|\geq \sqrt{\rho_2}|C_n|\to\infty,
$$
contradicting \eqref{BBB}.
\qed



\end{document}